\documentclass[11pt]{amsart}

\usepackage{xcolor}
\usepackage{amsmath,amssymb,amsthm,amsfonts,mathtools}
\usepackage{enumerate}
\usepackage{microtype}
\usepackage{needspace}
\usepackage{cite}
\usepackage[hidelinks]{hyperref}
\usepackage{url}
\newtheorem{theorem}{Theorem}[section]
\newtheorem{proposition}[theorem]{Proposition}
\newtheorem{lemma}[theorem]{Lemma}
\newtheorem{corollary}[theorem]{Corollary}

\theoremstyle{definition}
\newtheorem{definition}[theorem]{Definition}

\theoremstyle{remark}
\newtheorem{remark}[theorem]{Remark}

\newcommand{\F}{\mathbb F}

\newcommand{\Fbar}{\overline{\mathbb F}_p}

\title[Maximal curves with few automorphisms]{Maximal Curves with Few Automorphisms}

\author{Herivelto Borges}
\address{Instituto de Matem\'atica e Estat\'istica, Universidade de S\~ao Paulo\\
Rua do Mat\~ao, 1010, 05508-090 S\~ao Paulo, SP, Brazil}
\email{hborges@ime.usp.br}
\thanks{The first author was supported by CNPq (Brazil), grant 310194/2022-9, and by FAPESP (Brazil), grant 2024/00923-6.}

\author{Lucas Reis}
\address{Departamento de Matem\'atica, Universidade Federal de Minas Gerais, UFMG, Belo Horizonte, MG 30270-901, Brazil}
\email{lucasreismat@gmail.com}
\thanks{The second author was supported by CNPq (Brazil), grants 310583/2025-0 and 420721/2025-8, by FAPESP (Brazil), grant 25/21886-4, and by FAPEMIG (Brazil), grant APQ-02427-26.}

\begin{document}

\begin{abstract}

We consider the problem of constructing maximal curves with few automorphisms over finite fields. Our main ingredient is the determination of the \(\operatorname{PGL}_2\!\left(\overline{\mathbb F}_p\right)\)-stabilizer of the roots of the Chebyshev polynomials, apart from two exceptional cases already treated in the literature. We prove that, in all other cases, the only nontrivial M\"obius symmetry is the sign involution. This settles the general case for the automorphism groups of the corresponding Chebyshev curves. We further construct an infinite family of maximal curves of unbounded genus whose only nontrivial automorphism is the hyperelliptic involution. This reveals a contrasting phenomenon to the common heuristic that curves with many rational points tend to have large automorphism groups.

\end{abstract}

\maketitle

\section{Introduction}

Let $\mathcal X$ be a projective, nonsingular, geometrically irreducible
algebraic curve of genus $g$ defined over a finite field $\mathbb F_q$.
The Hasse--Weil bound states that
$$
 \bigl|\#\mathcal X(\mathbb F_q)-(q+1)\bigr|
 \leq 2g\sqrt q,
$$
and a curve attaining the upper bound is called
$\mathbb F_q$-maximal. Maximal curves have been extensively studied,
for their theoretical interest and for their applications to
coding theory and finite geometry; see, for instance,
\cite[Chapter~10]{HKT},\cite{JinMaximalCodes,KorchmarosNagyTimpanella}.
Their study has led to a broad classification program, from the
classical results of Ihara and R\"uck--Stichtenoth to the construction
of maximal curves not covered by the Hermitian curve and subsequent
developments
\cite{Ihara,RuckStichtenoth,GiuliettiKorchmaros,GGS}.
Nevertheless, a general classification remains out of reach, and the
construction of new families is still an active problem.

Another classical problem concerns the size of the full automorphism group
of an algebraic curve. In characteristic zero, the Hurwitz bound gives
$$
 |\operatorname{Aut}(\mathcal X)|\leq 84(g-1),
 \qquad g\geq2.
$$
In positive characteristic, this bound may fail because of wild
ramification. Stichtenoth proved that
$$
 |\operatorname{Aut}(\mathcal X)|<16g^4,
$$
with the Hermitian curve
$\mathcal H_r:\ X^{r+1}+Y^{r+1}+Z^{r+1}=0$, where $r$ is a power
of $p$, as the unique exception \cite{StichtenothAut}. For this curve,
$$
 g(\mathcal H_r)=\frac{r(r-1)}2,
 \qquad
 \operatorname{Aut}(\mathcal H_r)\cong\operatorname{PGU}(3,r).
$$
Henn subsequently showed that only four types of curves can occur when the
order of the automorphism group exceeds $8g^3$ \cite{Henn}. These
results were followed by substantial work on curves with large
automorphism groups in positive characteristic; see, for instance,
\cite{GiuliettiKorchmarosAut,Montanucci}.

Maximal curves and curves with many automorphisms have often appeared
side by side in the literature, the Hermitian curve being the most
familiar example. Many other classical families of such curves also have large automorphism groups, which have played an important role in their construction and classification
\cite{GiuliettiKorchmaros,GGS,GuralnickMalmskogPries}.
Accordingly, the expectation that curves with many rational points tend
to have large automorphism groups has been suggested in the literature
\cite{Fried1999,Weinstein} and is consistent with many of the examples
and constructions that have shaped the subject.

It is natural, however, to ask to what extent maximality can coexist
with a dearth of symmetry. If, as the evidence suggests, such curves are
indeed rare, then, a fortiori, their construction is all the more
compelling. This theoretical question is also of applied interest. In
code-based cryptography, for instance, automorphisms inherited by
algebraic-geometric codes may expose additional structure exploitable in
structural attacks; see, for example,
\cite{FaugereEtAl2016,Barelli}.

In this paper, we show that maximality can also be compatible with  small automorphism groups. We present, to the best of our knowledge, the first explicit family of maximal curves of unbounded genus whose automorphism groups have order bounded by an absolute constant. This suggests a broader study of maximal curves with few automorphisms, including the possible existence of families with trivial automorphism group.

Our hyperelliptic approach naturally leads us to a question concerning
Dickson polynomials of the first kind,
\begin{equation}\label{eq:dickson}
D_n(X,a)=
\sum_{j=0}^{\lfloor n/2\rfloor}
\frac{n}{n-j}\binom{n-j}{j}(-a)^jX^{n-2j}.
\end{equation}
These polynomials have been
studied from several perspectives; see
\cite{WilliamsDickson,AbhyankarCohenZieve,LidlMullenTurnwald} and the
references therein. We contribute to this literature by determining the
projective stabilizer of their root sets. In particular, our analysis
settles the generic case left open in the determination of the
automorphism groups of the Chebyshev curves
\cite{TafazolianTopAut}, for which  a note on  the   historical context can be found in  Section \ref{sec:aut-chebyshev-curves}. We then use this result to present a family of
maximal curves whose full automorphism group is isomorphic to $C_2$.

Here and throughout, we write
$D_n(X)=D_n(X,1)$, and $Z_n$ denotes its set of roots in $\overline{\mathbb F}_p$.
Our main technical result can be stated as follows.

\begin{theorem}\label{thm:stabilizer-Dn}
Let $p>2$, and let $n>4$ be an integer with $p\nmid n$. If there exists a
M\"obius transformation, distinct from $z\mapsto z$ and $z\mapsto -z$,
which permutes $Z_n$, then
\[
2n-1=p^a\qquad\text{or}\qquad 4n-1=p^e
\]
for some positive integers $a,e$.
\end{theorem}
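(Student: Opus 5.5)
The plan is to study $G$, the stabilizer of $Z_n$ in $\operatorname{PGL}_2(\Fbar)$, through the classification of finite subgroups of $\operatorname{PGL}_2(\Fbar)$ (Dickson's list). First I will set up the coordinates. Since $D_n(w+w^{-1})=w^n+w^{-n}$, we have
\[
Z_n=\{\zeta+\zeta^{-1} : \zeta^{2n}=-1\}.
\]
Because $p\nmid 2n$, these are $n$ distinct points, and $0\in Z_n$ exactly when $n$ is odd. As $n>4$, $G$ fixes a set of at least three points, so $G$ is finite, and it contains the sign involution $\sigma\colon z\mapsto -z$. Suppose $G\neq\langle\sigma\rangle$. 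Then, possibly after passing to a subgroup, there is either some $\tau\in G$ of prime order $\ell$ with $\tau\neq\sigma$, or $G$ contains a Klein four-group through $\sigma$. I will treat the cases of Dickson's classification in turn: cyclic, dihedral, $A_4$, $S_4$, $A_5$, the $p$-semielementary groups $E\rtimes C_m$, and $\operatorname{PSL}_2(q)$ or $\operatorname{PGL}_2(q)$. Throughout I use the fact that $Z_n$ is a union of $G$-orbits of total size $n$.

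For the tame cases (cyclic and dihedral of order prime to $p$, and $A_4$, $S_4$, $A_5$), I would pick an element $\tau$ of order $m\ge2$, $\tau\ne\sigma$, and conjugate it to $z\mapsto\xi z$ with $\xi$ a primitive $m$-th root of unity. Write $\tau=\psi^{-1}\circ(\xi z)\circ\psi$ with $\psi(z)=(az+b)/(cz+d)$. Invariance says that $(cX+d)^nD_n\bigl(\psi^{-1}(X)\bigr)$, up to the at most two fixed points of $\tau$ removed as linear factors, is a polynomial in $X^m$. The strongest tool here is the differential equation
\[
(X^2-4)D_n''+XD_n'-n^2D_n=0,
\]
which characterizes $D_n$ up to scalars. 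Transporting this equation by a Möbius map that preserves the root set gives a second-order equation of the same hypergeometric type, with singular points at $\psi(\pm2)$, $\psi(\infty)$ and exponents $0,\tfrac12$ and $\pm n$. A symmetry must permute these singular points together with their exponents. This forces $\tau$ to fix $\infty$ and to preserve $\{\pm2\}$, so $\tau\in\{z,-z\}$. The argument works as long as the exponents behave as in characteristic $0$, that is, as long as no exponent relation degenerates modulo $p$, and that is where the $p$-adic conditions should enter. If the ODE argument is hard to control in characteristic $p$, my fallback is to compare the first few coefficients of the explicit expansion \eqref{eq:dickson} after substitution. These are $X^n-nX^{n-2}+\tfrac{n(n-3)}2X^{n-4}-\cdots$, and the comparison should give polynomial identities in $n$ whose only solutions modulo $p$ are the exceptional ones.

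For the wild cases, $G$ contains an element of order $p$, conjugate to a translation. Since $Z_n$ contains no point fixed by that element (otherwise the remaining part would have size divisible by $p$ and I would count orbits instead), $p$ divides $n$ or $n-1$. I would then use the structure of the $p$-subgroup $E$: its orbits have size $|E|$ away from a single fixed point. The case $G\supseteq\operatorname{PSL}_2(q)$ is the expected source of the exceptions. Its orbit sizes on $\mathbb P^1(\Fbar)$ are $q+1$, $q(q-1)$, $q(q+1)/2$, $q(q-1)/2$ and $|G|$. Together with the presence of $\sigma$ and the cyclotomic description of $Z_n$, namely that $\zeta^{2n}=-1$ relates to norm-type conditions in $\F_{q^2}$, this should force either $2n=q+1$ or $4n=q+1$, which are exactly $2n-1=p^a$ or $4n-1=p^e$.

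The main obstacle will be excluding the tame cases uniformly in $p$, and in particular excluding an extra involution or a cyclic symmetry of small order. The ODE and exponent argument is where characteristic-$p$ degeneracies can hide, and I expect to need a careful check that any such degeneracy forces $p^a\in\{2n-1,4n-1\}$.
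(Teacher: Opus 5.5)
\paragraph{The missing step: from a congruence to an exact prime power.} Any local analysis of the kind you propose only sees $n$ modulo $p$. In the transported differential equation, $n$ enters only through $n^2$ and the gauge exponent $H^n$, reduced mod $p$. Your fallback, as you phrase it, gives "polynomial identities in $n$" solved modulo $p$. At best this yields congruences such as $2n\equiv1$ or $4n\equiv1\pmod p$, but the conclusion is not a congruence condition. For $p=5$, both $n=8$ and $n=13$ satisfy $2n\equiv1\pmod 5$. Yet $2\cdot13-1=25$, and $Z_{13}$ has extra symmetries: here $B_{13}(Y)=1+Y^{13}$, so scalings by $13$th roots of unity, conjugated back by $\tau$, stabilize $Z_{13}$. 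By contrast, $15$ and $31$ are not powers of $5$, so $Z_8$ has none. Your closing sentence, ``I expect to need a careful check that any such degeneracy forces $p^a\in\{2n-1,4n-1\}$'', is therefore exactly the missing proof.

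The paper supplies this step with a global identity. The extra symmetry is first reduced to an involution $M_{a,-4}$ with $2n\equiv1\pmod p$. Conjugating by $\tau$ moves it to the companion polynomial $B_n$, where $2B_n(X^2)=(1+X)^{2n}+(1-X)^{2n}$. Composed with $X\mapsto X^{-1}$, it becomes a scaling $B_n(\lambda X)=B_n(X)$. Then differentiate (using $2n\equiv1$), eliminate the $(1-X)$-powers, and write $2n-1=p^rM$ with $p\nmid M$; this forces $M=1$ (Lemma~\ref{lem:Bn-2}). The $4n-1$ case is obtained by pulling back along $X\mapsto X^2-2$, using $D_n\circ D_2=D_{2n}$.

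\paragraph{The exponent argument and the case split.} The exponent argument is not justified as stated. The transported equation is a \emph{different} equation satisfied by $D_n$. Before you can compare singular points and exponents, you must show it is a rational multiple of the original. Proposition~\ref{prop:differential-reduction} does this: eliminating $f''$ gives $H\Psi f'+Cf=0$ with $\deg\Psi\le4$, and separability together with $n>4$ forces $\Psi=0$. The coefficients of $\Psi$ then give $2n\equiv1\pmod p$ or $a^2=4$, which again are only congruences.

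Your case split also puts the exceptions in the wrong place. Since $p\nmid n$, an element of order $p$ in the stabilizer forces $n\equiv1\pmod p$. This is incompatible with $2n\equiv1$, so when $2n-1=p^a$ the stabilizer has order prime to $p$. The extra symmetries are tame: in the example above they have order $13$, fix $\pm2$, and move $\infty$. This contradicts both your claim that the tame analysis forces $\tau\in\{z,-z\}$ and your guess that $\operatorname{PSL}_2(q)$ produces the exceptions.

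Finally, Dickson's classification is unnecessary here. If $A$ is an extra symmetry, then $A\sigma A^{-1}$ is another involution in the stabilizer. This reduces everything to the one-parameter family $M_{a,b}$ (Lemma~\ref{lem:Mab}).
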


Combining this with a known maximality criterion, we obtain the following family of maximal hyperelliptic curves with the smallest possible  automorphism group.

\begin{theorem}\label{thm:main-hyperelliptic}
Let $q=p^s$, with $p>3$, and let $n>5$ be an odd divisor of $(q+1)/2$. Assume that neither $2n-1$ nor $4n-1$ is a power of $p$. Then the hyperelliptic curve
$$
\mathcal X_n:\qquad y^2=(x+2)D_n(x)
$$
is $\mathbb F_{q^2}$-maximal of genus $g=(n-1)/2$, and its only nontrivial  automorphism is the hyperelliptic involution
$\iota:(x,y)\longmapsto(x,-y)$.
\end{theorem}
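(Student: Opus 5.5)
The proof has three parts: the genus and separability, maximality via a Fermat-type cover, and the automorphism group via the stabilizer of the branch locus in $\operatorname{PGL}_2$. The last part, ruling out a Möbius map that moves $-2$, is the main obstacle.

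\emph{Genus.} I substitute $x=z+z^{-1}$, so that $D_n(x)=z^n+z^{-n}$. The roots of $D_n$ are then the values $\zeta+\zeta^{-1}$ with $\zeta^{2n}=-1$. Since $p\nmid 2n$, these $2n$ values of $\zeta$ are distinct and pair up under $\zeta\mapsto\zeta^{-1}$, so $Z_n$ consists of $n$ distinct elements. Moreover $D_n(-2)=(-1)^n\cdot 2=-2\neq 0$ because $n$ is odd. Hence $(x+2)D_n(x)$ is separable of even degree $n+1$. The curve is therefore hyperelliptic with branch locus $B=Z_n\cup\{-2\}$, with $\infty$ unramified, and has genus $(n-1)/2$. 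Since $n>5$ is odd, this genus is at least $3$.

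\emph{Maximality.} Under $x=z+z^{-1}$ we have
\[
(x+2)D_n(x)=\frac{(z+1)^2(z^{2n}+1)}{z^{n+1}}.
\]
Because $n+1$ is even, setting $w=yz^{(n+1)/2}/(z+1)$ turns the equation into $w^2=z^{2n}+1$. This shows that $\mathcal X_n$ is the quotient of $\mathcal Y:\ w^2=z^{2n}+1$ by the involution $(z,w)\mapsto(z^{-1},w z^{-n})$.

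The curve $\mathcal Y$ is a quotient of the Fermat curve $u^{2n}+v^{2n}=1$. Since $2n\mid q+1$, that Fermat curve is in turn a quotient of the Hermitian curve $\mathcal H_q$, so $\mathcal Y$ is $\mathbb F_{q^2}$-maximal. By the Serre--Lachaud result (every curve $\mathbb F_{q^2}$-covered by a maximal curve is maximal), $\mathcal X_n$ is then $\mathbb F_{q^2}$-maximal. This is the ``known maximality criterion'' referred to in the text.

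\emph{Automorphisms.} Since $g\ge 2$, the hyperelliptic involution $\iota$ is central, and $\operatorname{Aut}(\mathcal X_n)/\langle\iota\rangle$ embeds into the subgroup $G_B\le\operatorname{PGL}_2(\overline{\mathbb F}_p)$ that stabilizes $B$. It suffices to show $G_B=1$, and I would split this into two steps.

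\begin{enumerate}
\item[(a)] Show that every $\sigma\in G_B$ fixes $-2$. Then $\sigma$ permutes $Z_n$, so by Theorem~\ref{thm:stabilizer-Dn} (using $n>4$, $p\nmid n$, and the exclusion of $2n-1,4n-1$ as powers of $p$) we get $\sigma\in\{z\mapsto z,\ z\mapsto -z\}$. But $z\mapsto -z$ sends $-2$ to $2$, and $2\notin B$ since $D_n(2)=2$. Hence $\sigma=\mathrm{id}$.
\item[(b)] Exclude the possibility that some $\sigma\in G_B$ sends $-2$ to a root $r\in Z_n$. This is the step I expect to be the main obstacle.
\end{enumerate}

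For (b) I would first try to show that any automorphism of $\mathcal X_n$ lifts through the degree-$2$ cover $\mathcal Y\to\mathcal X_n$. The idea is to express this cover intrinsically, for instance through the $2$-torsion point of the Jacobian given by the divisor class $\tfrac12\bigl((P_{-2})-(P_\infty)\bigr)$-type data, which the automorphism would have to preserve. One then uses the explicit automorphism group of $w^2=z^{2n}+1$: its reduced group stabilizes $\mu_{4n}\setminus\mu_{2n}$ and is dihedral of order $4n$ when $p>3$ and the exceptional cases are excluded. From this one reads off that $-2$, the image of the fixed points $z=-1$, cannot be moved into $Z_n$.

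If the lifting argument fails, the fallback is a direct argument with the classification of finite subgroups of $\operatorname{PGL}_2(\overline{\mathbb F}_p)$. The orbit of $-2$ under $G_B$ would meet $Z_n$, so $G_B$ would act on the $n+1$ points of $B\subset\mathbb F_q$ with a large orbit. Combining the orbit sizes, the possible point stabilizers (cyclic, or of $p$-power order), and the fact that the stabilizer of $Z_n$ inside $G_B$ is trivial by (a), one should obtain a contradiction with $n>5$ and $p>3$.
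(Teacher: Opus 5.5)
Your genus computation, step (a), and the maximality argument are fine. The maximality argument writes $\mathcal X_n$ as the quotient of $w^2=z^{2n}+1$ by $(z,w)\mapsto(z^{-1},wz^{-n})$ and covers that curve by a Fermat/Hermitian curve. It is a valid, self-contained substitute for the paper's appeal to Tafazolian--Top. Step (a) is exactly the paper's first case.

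The genuine gap is step (b). It is the entire difficulty, and neither of your sketches closes it.

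\emph{The lifting idea rests on a false premise.} Riemann--Hurwitz gives $2(n-1)-2=2\bigl(2\cdot\tfrac{n-1}{2}-2\bigr)+2$, so $\mathcal Y\to\mathcal X_n$ is not \'etale and is not attached to a $2$-torsion class. It is $\mathcal X_n(\sqrt{x^2-4})$, ramified exactly at the two points $(2,\pm2\sqrt2)$ coming from the fixed points $z=1$ of your involution. An automorphism of $\mathcal X_n$ can lift only if its reduced M\"obius map fixes $x=2$, and that is no easier than what you are trying to prove. Moreover, the claim that the reduced group of $w^2=z^{2n}+1$ is dihedral of order $4n$ is itself an unproved stabilizer computation of the same kind as Theorem~\ref{thm:stabilizer-Dn}.

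\emph{The fallback cannot succeed on group theory alone.} By (a), the stabilizer of $-2$ in $G_B$ is trivial. Nothing in your list of constraints rules out, say, $G_B\cong C_2$ generated by an involution interchanging $-2$ with a root $r$. Such an involution has two fixed points on $\mathbb P^1$, and $|B|=n+1$ is even, so it may fix $0$ or $2$ points of $B$ without contradiction. Some algebraic input specific to $D_n$ is unavoidable. (Incidentally, $B\not\subset\F_q$ in general: a nonzero root satisfies $\alpha^q=(-1)^{(q+1)/(2n)}\alpha$.)

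The missing idea is the one in Lemma~\ref{lem:Bn-infinity} and Corollary~\ref{cor:stabilizer-extended}.
\begin{enumerate}
\item Conjugate by $\tau(X)=(X-2)/(X+2)$. This sends $-2$ to $\infty$ and $Z_n$ onto the root set $W_n$ of $B_n(X)=\sum_j\binom{2n}{2j}X^j$ (Lemma~\ref{lem:Bn-1}).
\item A M\"obius map moving $-2$ into $Z_n$ then becomes $\varphi(X)=v+\kappa/(X-u)$ with $u,v\in W_n$, and it satisfies $(X-u)^{n+1}B_n(\varphi(X))=\lambda B_n(X)$.
\item Combine this functional equation with $2X(1-X)B_n''+(1+(4n-3)X)B_n'-n(2n-1)B_n=0$ via Proposition~\ref{prop:differential-reduction}. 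This yields a polynomial $\Psi$ of degree at most $4$ that is divisible by $B_n/(X-u)$, which has degree $n-1>4$. Hence $\Psi=0$.
\item The $X^4$-coefficient of $\Psi$ is $6v(1-v)$. It is nonzero because $B_n(0)=1$, $B_n(1)=2^{2n-1}$ and $p>3$, a contradiction.
\end{enumerate}
This is precisely where the hypotheses $p>3$ and $n>5$ enter, and nothing in your proposal plays that role.
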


The hypotheses of Theorem~\ref{thm:stabilizer-Dn} are satisfied by several explicit  families. We illustrate such families and discuss a related extremal question in Section~\ref{sec:explicit-families}.

The proof of Theorem \ref{thm:stabilizer-Dn} occupies Sections~\ref{sec:auxiliary}--\ref{sec:proof-stabilizer}.
We then apply Theorem~\ref{thm:stabilizer-Dn} to determine the full
automorphism groups of the Chebyshev curves (Theorem \ref{thm:aut-Cn}) and to prove Theorem \ref{thm:main-hyperelliptic}.

\begin{remark}
For each nonzero $a\in \overline{\F}_p$, let $Z_{n}(a)$ denote the set of roots of $D_n(X, a)$. Looking at the explicit expression for $D_n(X, a)$, we see that if $\alpha\in \overline{\F}_p$ satisfies $\alpha^2=a$, then 
$D_n(\alpha X, a)=\alpha^n D_n(X, 1)$. In particular, $Z_{n}(a)=\alpha Z_n$. Therefore, a M\"obius map $\varphi:\mathbb P^1(\overline{\F}_p)\mapsto \mathbb P^1(\overline{\F}_p)$ permutes the set $Z_{n}(a)$ if and only if the M\"obius map $z\mapsto \alpha^{-1} \varphi(\alpha z)$ permutes the set $Z_n$. 
Consequently, Theorem~\ref{thm:stabilizer-Dn} holds for $Z_n(a)$ for every $a\in\overline{\F}_p^*$.
\end{remark}

\section{Auxiliary results}\label{sec:auxiliary}
Throughout this paper, $p$ denotes an odd prime number. In this section, we provide relevant machinery along with some preliminary results that will be used later on.  We recall some standard properties of Dickson polynomials; see \cite[Chapter~2]{LidlMullenTurnwald}.

\begin{lemma}\label{lem:dick-properties}
Let $m,n$ be positive integers. Then the following hold.
\begin{enumerate}[\upshape 1.]
    \item $D_{mn}(X)=D_m(D_n(X))$,
    \item if $p\nmid n$, then $D_n(X)$ is separable in $\F_p[x]$.
\end{enumerate}
Moreover, we have the following identities:
\begin{equation}\label{eq:x+1/x}
D_n\left(X+X^{-1}\right)=X^n+X^{-n},
\end{equation}
and
\begin{equation}\label{eq:dif-dickson}
(X^2-4)D_n''(X)+XD_n'(X)-n^2D_n(X)=0.
\end{equation}
\end{lemma}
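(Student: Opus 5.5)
The plan is to first establish the functional equation \eqref{eq:x+1/x} over $\mathbb Z$ and then derive everything else from it, reducing modulo $p$ at the end. The coefficients $\frac{n}{n-j}\binom{n-j}{j}$ in \eqref{eq:dickson} are integers, so $D_n(X)\in\mathbb Z[X]$. Any polynomial identity proved over $\mathbb Z$ (or in $\mathbb Z[X,X^{-1}]$) therefore specializes to $\mathbb F_p$.

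For \eqref{eq:x+1/x}, I would check from \eqref{eq:dickson}, using Pascal's rule on the coefficients, that $D_0=2$, $D_1=X$ and $D_n=XD_{n-1}-D_{n-2}$. The sequence $X^n+X^{-n}$ satisfies the same recurrence with the same initial values when $X+X^{-1}$ is substituted. This coefficient bookkeeping is the only mildly tedious step, and I expect it to be the main obstacle. Alternatively, one can simply cite \cite[Chapter~2]{LidlMullenTurnwald}.

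Composition follows next. Over any field, we have
\[
D_m(D_n(u+u^{-1}))=D_m(u^n+u^{-n})=u^{mn}+u^{-mn}=D_{mn}(u+u^{-1}).
\]
The element $u+u^{-1}$ is transcendental over the prime field inside $\mathbb F_p(u)$. Hence the polynomial $D_m(D_n(X))-D_{mn}(X)$ vanishes identically.

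For the differential equation \eqref{eq:dif-dickson}, put $X=u+u^{-1}$ and let $\theta=u\,\frac{d}{du}$. For any polynomial $f$, the chain rule gives $\theta f(X)=(u-u^{-1})f'(X)$. Applying $\theta$ again gives
\[
\theta^2 f(X)=(u-u^{-1})^2f''(X)+(u+u^{-1})f'(X)=(X^2-4)f''(X)+Xf'(X).
\]
Since $\theta^2(u^n+u^{-n})=n^2(u^n+u^{-n})$, we get the identity in $\mathbb Z[u,u^{-1}]$ after substituting $X=u+u^{-1}$. By transcendence again, this yields \eqref{eq:dif-dickson}.

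For separability, take $p\nmid n$ and a root $\beta$ of $D_n$ in $\Fbar$. Write $\beta=u+u^{-1}$ with $u\in\Fbar^*$. Then $u^n+u^{-n}=0$, that is, $u^{2n}=-1$, and in particular $u^2\neq1$ because $p$ is odd. Differentiating \eqref{eq:x+1/x} in $u$ gives
\[
D_n'(\beta)(1-u^{-2})=n(u^{n-1}-u^{-n-1}).
\]
Substituting $u^{-n}=-u^n$, the right-hand side becomes $2nu^{n-1}$, which is nonzero since $p\nmid 2n$. Because $1-u^{-2}\neq0$, it follows that $D_n'(\beta)\neq0$. So every root is simple and $D_n$ is separable.
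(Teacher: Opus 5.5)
Your proof is correct, but it is not the paper's route: the paper gives no proof of this lemma at all. It records these facts as standard and refers to \cite[Chapter~2]{LidlMullenTurnwald}, which is the citation you mention as an alternative.

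Your argument is self-contained and built on the single functional equation \eqref{eq:x+1/x}. You get that equation from the recurrence $D_n=XD_{n-1}-D_{n-2}$, which follows from the identity $\frac{n}{n-j}\binom{n-j}{j}=\binom{n-j}{j}+\binom{n-j-1}{j-1}$ (this also gives integrality) together with Pascal's rule. The composition rule and the differential equation then follow by evaluating at the transcendental element $u+u^{-1}$, the latter via the Euler operator $u\,\frac{d}{du}$. Separability follows by differentiating \eqref{eq:x+1/x} at a root.

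The citation buys brevity. Your derivation buys transparency about where the hypotheses enter. In particular, your separability step uses $p\neq 2$ twice: once to get $u^2\neq1$ from $u^{2n}=-1$, and once to get $2n\neq0$. This is genuinely needed, since over $\mathbb F_2$ one has $D_3(X)=X^3+X=X(X+1)^2$. The lemma as stated relies silently on the paper's standing convention that $p$ is odd.

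Two small bookkeeping points. First, \eqref{eq:dickson} does not define $D_0$, because the $j=0$ coefficient is $0/0$. Either set $D_0:=2$ by convention, or start the recurrence at $n=3$ with initial values $D_1=X$ and $D_2=X^2-2$. Second, if you want the identities in $\mathbb Z[X]$ before reducing mod $p$, argue via transcendence of $u+u^{-1}$ over $\mathbb Q$ rather than over $\mathbb F_p$. This works over any field $K$: $u$ is algebraic of degree at most $2$ over $K(u+u^{-1})$, so $u+u^{-1}$ is transcendental over $K$ inside $K(u)$.
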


\subsection{The M\"obius action and affine symmetries}

Given
\[
A=\begin{pmatrix}a&b\\ c&d\end{pmatrix}\in \mathrm{GL}_2(\overline{\F}_p)
\]
and $f\in\overline{\F}_p[X]$, we set
\[
A\circ f=(cX+d)^{\deg f}f\left(\frac{aX+b}{cX+d}\right).
\]
For each $\alpha\in\overline{\F}_p$ with $\alpha\ne-d/c$ if $c\ne0$, we set
\[
A\circ\alpha=\frac{a\alpha+b}{c\alpha+d}.
\]

\begin{definition}
A polynomial $f\in\overline{\F}_p[X]$ is \emph{invariant} by $A$ if $A\circ f=\lambda f$ for some $\lambda\in\overline{\F}_p$.
\end{definition}

For $A,B\in\mathrm{GL}_2(\overline{\F}_p)$, we write $A\sim B$ if there exists $\delta\in\overline{\F}_p^*$ such that $A=\delta B$. The following lemma, whose proof is straightforward, will be frequently used without further mention.

\begin{lemma}\label{lem:action}
Let $f\in\overline{\F}_p[X]$ be a nonconstant polynomial, and fix $A,B\in\mathrm{GL}_2(\overline{\F}_p)$.
\begin{enumerate}[\upshape (a)]
\item If $f$ is separable, then $f$ is invariant by $A$ if and only if the map $z\mapsto A\circ z$ permutes the roots of $f$. In the affirmative case, $f$ is also invariant by $A^{-1}$
\item If $f$ is invariant by $A$ and $B$, then $f$ is invariant by $AB$
\item If $f$ is invariant by $A$, then $f$ is invariant by any $C\in\mathrm{GL}_2(\overline{\F}_p)$ with $A\sim C$.
\end{enumerate}
\end{lemma}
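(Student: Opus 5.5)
The approach is to replace $f$ by its homogenization, so that the Möbius action becomes a genuine linear action on binary forms, and then read off all three parts from that. Let $n=\deg f$ and set $F(X,Y)=Y^nf(X/Y)$, a binary form of degree $n$ with $F(1,0)\neq 0$, i.e.\ $F$ has no zero at $\infty\in\mathbb P^1$. For $A=\begin{pmatrix}a&b\\ c&d\end{pmatrix}$ define
\[
(F\cdot A)(X,Y)=F(aX+bY,\;cX+dY).
\]
A direct check gives $(A\circ f)(X)=(F\cdot A)(X,1)$. Moreover $F\cdot(AB)=(F\cdot A)\cdot B$, and $F\cdot A\neq 0$ because $A$ is invertible.

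The first step is to show that $A\circ f=\lambda f$ is equivalent to $F\cdot A=\lambda F$ as forms, with then $\lambda\neq0$.
\begin{itemize}
\item If $F\cdot A=\lambda F$, setting $Y=1$ gives $A\circ f=\lambda f$.
\item Conversely, suppose $A\circ f=\lambda f$. Then $\lambda\neq 0$: otherwise $F\cdot A$ is a nonzero form of degree $n$ that is divisible by $Y^n$, hence equal to $\mu Y^n$ with $\mu\neq 0$. Then $F=\mu\,(Y^n\cdot A^{-1})$ is a multiple of the $n$-th power of a linear form, so $f$ has a single root of multiplicity $n$ or is constant. The separable case is excluded by (a)'s hypothesis. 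For general $f$, I would simply note that we only use the form version below.
\item If $\lambda\neq0$, then $\deg(A\circ f)=n$, so homogenizing both sides in degree $n$ gives $F\cdot A=\lambda F$.
\end{itemize}
I expect this degree bookkeeping to be the only delicate point. A priori $A\circ f$ can have degree smaller than $n$, exactly when $A$ sends $\infty$ to a root of $f$.

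Part (c) is then immediate: $F\cdot(\delta A)=\delta^nF\cdot A$.

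For part (b), if $F\cdot A=\lambda F$ and $F\cdot B=\mu F$, then
\[
F\cdot(AB)=(\lambda F)\cdot B=\lambda\mu F .
\]

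For part (a), the zero set of $F\cdot A$ in $\mathbb P^1$ is $A^{-1}(V(F))$, multiplicities included, where $A$ acts on $\mathbb P^1$ as the Möbius map $z\mapsto A\circ z$. Also $V(F)=Z$ is the set of roots of $f$, which consists of $n$ distinct points by separability.
\begin{itemize}
\item If $F\cdot A=\lambda F$ with $\lambda\ne0$, then $A^{-1}(Z)=Z$. Since $A$ is a bijection of $\mathbb P^1$, it permutes $Z$; in particular no root is sent to $\infty$.
\item Conversely, if $A$ permutes $Z$, then $F\cdot A$ is a nonzero degree-$n$ form vanishing at the $n$ distinct points of $Z$. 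Hence it is a nonzero scalar multiple of $F$, and setting $Y=1$ gives invariance.
\end{itemize}
Finally, $A$ permuting $Z$ implies $A^{-1}$ permutes $Z$, so $f$ is invariant by $A^{-1}$ by the same equivalence.
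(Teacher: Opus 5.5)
Your overall strategy is sound, and it is a reasonable way to supply a proof the paper omits (it only calls the lemma straightforward). Homogenizing $f$ to the binary form $F$ gives $A\circ f=(F\cdot A)(X,1)$ and $F\cdot(AB)=(F\cdot A)\cdot B$. Once you know that invariance of $f$ is equivalent to $F\cdot A=\lambda F$ with $\lambda\neq 0$, your arguments for (b), (c) and both directions of (a) are correct.

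The problem is the step where you establish that equivalence, specifically your proof that $\lambda\neq0$. From $\lambda=0$ you get $(F\cdot A)(X,1)=0$. This does not say that $F\cdot A$ is a nonzero multiple of $Y^n$; if $F\cdot A=\mu Y^n$ with $\mu\neq0$, then $(F\cdot A)(X,1)=\mu\neq 0$. So the inference is invalid, and the ``single root of multiplicity $n$'' contradiction you derive from it does not stand. Even on its own terms, that contradiction fails for $n=1$, where a linear separable $f$ does have a single root of multiplicity $n$. More importantly, you explicitly leave non-separable $f$ aside by saying you only use the form version. But the hypotheses of (b) and (c) are the polynomial identity $A\circ f=\lambda f$ for an arbitrary nonconstant $f$, so you do need to pass from that identity to $F\cdot A=\lambda F$ there. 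As written, (b) is not justified for non-separable $f$.

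The fix is one line, and it removes the ``delicate'' degree bookkeeping entirely. Both $F\cdot A$ and $\lambda F$ are binary forms of degree $n$ (or zero). On such forms, dehomogenization $G\mapsto G(X,1)$ is injective: $G=\sum_i g_iX^iY^{n-i}$ and $G(X,1)=\sum_i g_iX^i$ have the same coefficients. Hence, for every $\lambda$, $A\circ f=\lambda f$ holds if and only if $F\cdot A=\lambda F$. Since $F\cdot A\neq 0$ because $A$ is invertible, this forces $\lambda\neq0$. With this replacement, the rest of your proof goes through for all nonconstant $f$.
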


In the next lemma, we  determine the upper triangular matrices under which $D_n(X)$ is invariant.

\begin{lemma}\label{lem:affine}
If $n\ge 2$ is not divisible by $p$, and $D_n(X)$ is invariant by
\[
\begin{pmatrix}a&b\\0&1\end{pmatrix},
\]
then $b=0$ and $a=\pm1$.
\end{lemma}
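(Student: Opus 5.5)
We have to show that if $D_n(aX+b)=\lambda D_n(X)$ for some $\lambda$, then $b=0$ and $a=\pm1$. Since $\deg D_n=n$ and $D_n$ is monic, comparing leading coefficients gives $\lambda=a^n$. So the hypothesis becomes the polynomial identity
\[
D_n(aX+b)=a^nD_n(X).
\]
The plan is to compare the next two coefficients on each side, using the explicit expansion \eqref{eq:dickson} with $a=1$:
\[
D_n(X)=X^n-nX^{n-2}+\tfrac{n(n-3)}{2}X^{n-4}-\cdots .
\]
The key feature is that $D_n$ has no $X^{n-1}$ term.

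First compare the coefficients of $X^{n-1}$. On the right-hand side this coefficient is $0$. On the left-hand side it comes only from $(aX+b)^n$, and equals $n a^{n-1} b$. Since $p\nmid n$ and $a\neq0$ (the matrix is invertible), we get $b=0$.

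With $b=0$ the identity reads $D_n(aX)=a^nD_n(X)$. Now compare the coefficients of $X^{n-2}$. The left-hand side gives $-na^{n-2}$ and the right-hand side gives $-na^n$. Because $p\nmid n$ and $n\ge2$, the $X^{n-2}$ coefficient $-n$ is nonzero, so we can cancel and obtain $a^{n-2}=a^n$. This forces $a^2=1$, that is, $a=\pm1$.

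The whole argument is elementary. The only point needing care is that the coefficient of $X^{n-2}$ in $D_n$ is nonzero in characteristic $p$, and this is exactly where $p\nmid n$ is used a second time.

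For $n=2$ the expansion is simply $D_2=X^2-2$, so the $X^{n-2}$ coefficient is the constant term $-2$. This is nonzero because $p$ is odd, and the comparison still gives $a^2=1$, since $-2a^0$ must equal $-2a^2$.

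For $n=3$ we have $D_3=X^3-3X$, and the same comparison applies with the coefficient $-3$, which is nonzero because $p\nmid 3$.

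No separability or root-set argument is needed: invariance is defined as a polynomial identity, so direct coefficient comparison suffices. I do not expect any step to be a real obstacle.
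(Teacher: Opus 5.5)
Your proposal is correct and follows the paper's proof essentially verbatim. Comparing the $X^{n-1}$ coefficients gives $na^{n-1}b=0$, hence $b=0$; comparing the $X^n$ and $X^{n-2}$ coefficients then gives $a^n=a^{n-2}$, hence $a=\pm1$. Your separate checks for $n=2,3$ are redundant, since the $X^{n-2}$ coefficient is $-n$ for every $n\ge 2$ and is nonzero precisely because $p\nmid n$.
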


\begin{proof}
From the hypothesis, $D_n(aX+b)=tD_n(X)$ for some nonzero $t\in\overline{\F}_p$. Comparing the coefficients of $X^{n-1}$ and using Eq.~\eqref{eq:dickson} yield $na^{n-1}b=0$. Since $a\ne0$ and $p\nmid n$, we conclude that $b=0$. Thus $D_n(aX)=tD_n(X)$. Comparing the coefficients of $X^n$ and $X^{n-2}$ gives $t=a^n$ and $-tn=-a^{n-2}n$. Hence $a^n=t=a^{n-2}$, so $a^2=1$ and therefore $a=\pm1$.
\end{proof}

\subsection{$D_n$ and its companion polynomial}




For each positive integer $n$, set
\[
B_n(X)=\sum_{j=0}^n\binom{2n}{2j}X^j.
\]
In particular,
\begin{equation}\label{eq:bn-id}
B_n(X^2)=\frac{(1+X)^{2n}+(1-X)^{2n}}{2}.    
\end{equation}

\begin{remark}\label{rem:Bn}
Since $\binom{2n}{2j}=\binom{2n}{2(n-j)}$, $B_n(X)$ is invariant by
\[
\begin{pmatrix}0&1\\1&0\end{pmatrix}.
\]
\end{remark}

For the rest of the paper, we set
\begin{equation}\label{tau-def}
   \sigma=\begin{pmatrix}-1&0\\0&1\end{pmatrix},
\qquad
\tau=\begin{pmatrix}1&-2\\1&2\end{pmatrix}.
\end{equation}

Let $Z_n$ and $W_n$ denote the sets of roots of $D_n$ and $B_n$, respectively.

\begin{lemma}\label{lem:Bn-1}
For every $n\ge1$, we have
\begin{equation}\label{Dn-Bn}
4^nD_n(X)=2(X+2)^nB_n(\tau(X)).
\end{equation}
Consequently,
\[
W_n=\{\tau\circ\alpha:\alpha\in Z_n\}.
\]
In particular, if $p\nmid n$ and $D_n$ is invariant by $A$, then $B_n$ is invariant by $\tau A\tau^{-1}$.
\end{lemma}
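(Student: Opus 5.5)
We need to prove Lemma \ref{lem:Bn-1}: the identity $4^nD_n(X)=2(X+2)^nB_n(\tau(X))$, the description of $W_n$ as the image of $Z_n$ under $\tau$, and the transfer of invariance.

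For the identity, we will use the functional equation \eqref{eq:x+1/x}. Both sides are polynomials in $X$ of degree at most $n$, so it suffices to verify the identity after the substitution $X=T+T^{-1}$, viewed in the rational function field $\overline{\F}_p(T)$. This substitution is transcendental, so two polynomials that agree at $T+T^{-1}$ are equal. Under it, $X+2=T^{-1}(T+1)^2$, $X-2=T^{-1}(T-1)^2$, and $\tau(X)=(X-2)/(X+2)=\bigl((T-1)/(T+1)\bigr)^2$. Set $u=(T-1)/(T+1)$ and apply \eqref{eq:bn-id}:
\[
B_n(u^2)=\frac{(1+u)^{2n}+(1-u)^{2n}}{2}.
\]
Now $1+u=2T/(T+1)$ and $1-u=2/(T+1)$. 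Hence
\[
B_n(\tau(X))=\frac{4^n(T^{2n}+1)}{2(T+1)^{2n}}.
\]
Multiplying by $2(X+2)^n=2T^{-n}(T+1)^{2n}$ gives $4^n(T^n+T^{-n})=4^nD_n(T+T^{-1})$, which proves \eqref{Dn-Bn}. The only care needed is to treat the identity polynomially rather than pointwise. The same computation also works over $\mathbb Z$ before reducing mod $p$, so $p=2$ never causes trouble.

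For the root correspondence, evaluate \eqref{Dn-Bn} at $\alpha\neq-2$: since $p$ is odd, $D_n(\alpha)=0$ if and only if $B_n(\tau\circ\alpha)=0$. Two checks remain.

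\begin{enumerate}[\upshape (i)]
\item $-2\notin Z_n$. By Lemma \ref{lem:dick-properties}, $D_n(-2)=D_n(-1-1)=2(-1)^n\neq0$.
\item $\tau$ misses no root. The only value $\tau$ fails to attain on $\overline{\F}_p\setminus\{-2\}$ is $\tau(\infty)=1$, and $B_n(1)=2^{2n-1}\neq 0$.
\end{enumerate}

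Since $B_n$ has degree $n$ with leading coefficient $1$, counting roots with multiplicity via \eqref{Dn-Bn} shows $\tau$ maps $Z_n$ onto $W_n$. This gives $W_n=\tau(Z_n)$.

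For the final claim, assume $p\nmid n$. Then $D_n$ is separable by Lemma \ref{lem:dick-properties}, so $B_n$ is separable by \eqref{Dn-Bn}: its roots are the $n$ distinct images $\tau(Z_n)$, and $\tau$ is injective. If $D_n$ is invariant by $A$, Lemma \ref{lem:action}(a) says $A$ permutes $Z_n$. Then $\tau A\tau^{-1}$ permutes $\tau(Z_n)=W_n$, and by Lemma \ref{lem:action}(a) again, $B_n$ is invariant by $\tau A\tau^{-1}$.

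We have to make sure $\tau A\tau^{-1}$ is well defined on $W_n$. This holds because $A$ maps $Z_n$ into $Z_n$, which avoids $\infty$ and $-2$. No step here is a real obstacle; the main care point is the degree and root-counting bookkeeping around $\infty$ and $-2$.
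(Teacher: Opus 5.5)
Your proof is correct and follows the paper's approach: the paper verifies \eqref{Dn-Bn} at $X=Y^2+Y^{-2}$, which is your substitution with $T=Y^2$, and then obtains the root correspondence and the transfer of invariance from \eqref{Dn-Bn} and Lemma~\ref{lem:action}. Your explicit checks that $-2\notin Z_n$, that $1\notin W_n$, and that $B_n$ is separable when $p\nmid n$ supply details the paper leaves implicit.
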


\begin{proof}
For the first statement, it suffices to check Eq.~\eqref{Dn-Bn} for $X=Y^2+Y^{-2}$. Eq.~\eqref{eq:x+1/x} yields 
$$D_n(Y^2+Y^{-2})=Y^{2n}+Y^{-2n}.$$
On the other hand, $\tau(Y^2+Y^{-2})=\left(\frac{Y-Y^{-1}}{Y+Y^{-1}}\right)^2$ and so Eq.~\eqref{eq:bn-id} yields
$$B_n(\tau(Y^2+Y^{-2}))=2^{-1}(Y+Y^{-1})^{-2n}((2Y)^{2n}+(2Y^{-1})^{2n}).$$
Since $(Y^2+Y^{-2}+2)^n=(Y+Y^{-1})^{2n}$, the result follows. The remaining statements follow by Eq.~\eqref{Dn-Bn} and Lemma~\ref{lem:action}.

\end{proof}



Next we classify the integers $n$ for which $B_n(X)$ can be invariant by a nontrivial diagonal element.

\begin{lemma}\label{lem:Bn-2}
If $2n\equiv1\pmod p$, and $B_n(X)$ is invariant by
\[
\begin{pmatrix}\lambda&0\\0&1\end{pmatrix}
\]
for some $\lambda\ne1$, then there exists $r\ge1$ such that $2n=p^r+1$.
\end{lemma}

\begin{proof}
By hypothesis, there exists $\delta\in\overline{\F}_p^*$ such that $B_n(\lambda X)=\delta B_n(X)$. Comparing constant terms gives $\delta=1$. Thus $B_n(\lambda X)=B_n(X)$. Let $\ell\in\overline{\F}_p$ satisfy $\ell^2=\lambda$. Since $\lambda\ne1$, we have $\ell\ne\pm1$. Using
\[
2B_n(X^2)=(1+X)^{2n}+(1-X)^{2n},
\]
we obtain
\[
(1+\ell X)^{2n}+(1-\ell X)^{2n}
=(1+X)^{2n}+(1-X)^{2n}.
\]
Taking derivatives and using $2n\equiv1\pmod p$ gives
\[
\ell(1+\ell X)^{2n-1}-\ell(1-\ell X)^{2n-1}
=(1+X)^{2n-1}-(1-X)^{2n-1}.
\]
Eliminating the powers of $1-X$ from these two identities yields
\begin{equation}\label{eq:frob}
(1+\ell)(1+\ell X)^{2n-1}
+(1-\ell)(1-\ell X)^{2n-1}
=2(1+X)^{2n-1}.
\end{equation}
Write $2n-1=p^r M$, where  $M\ge1$ is not divisible by $p$. Setting $Y=X^{p^r}$, Eq.~\eqref{eq:frob} gives
\[
\alpha(1+tY)^M+\beta(1-tY)^M=(1+Y)^M
\]
for some nonzero $\alpha,\beta,t\in\overline{\F}_p$ with $t\ne\pm1$. If $M>1$, then $Y=-1$ is a multiple root of the right-hand side. Evaluating the identity and its derivative at $Y=-1$ gives
\[
\begin{cases}
\alpha(1-t)^M+\beta(1+t)^M=0,\\
M\alpha t(1-t)^{M-1}-M\beta t(1+t)^{M-1}=0.
\end{cases}
\]
Since $p\nmid M$, and $\alpha,\beta,t,1\pm t$ are nonzero, we obtain
\[
(1-t)^M(1+t)^{M-1}
=-(1-t)^{M-1}(1+t)^M,
\]
and hence $1-t=-(1+t)$, which gives $2=0$, a contradiction. Therefore, $M=1$, thus $2n-1=p^r$.
\end{proof}

We first record a general result that will be useful in the proofs of Lemma~\ref{lem:Bn-infinity} and Proposition~\ref{prop:main}.

\begin{proposition}\label{prop:differential-reduction}

Let $K$ be a field and  $f\in K[X]$ be a separable polynomial satisfying
\begin{equation}\label{eq:general-differential}
P_2(X)f''(X)+P_1(X)f'(X)+\rho f(X)=0,
\end{equation}
where $\deg P_2\le2$, $\deg P_1\le1$, and $\rho\in K$.

Let
\[
\varphi(X)=\frac{aX+b}{cX+d},
\qquad
H=cX+d,
\qquad
\Delta=ad-bc\neq 0,
\]
and suppose that
\begin{equation}\label{eq:general-mobius}
H^{m}f(\varphi(X))=\lambda f(X)
\end{equation}
for some $\lambda\in K^*$ and some positive integer $m$.

Set
\[
\widetilde P_2=H^2P_2(\varphi),
\qquad
\widetilde P_1=HP_1(\varphi).
\]

Then
\[
H\Psi(X)f'(X)+C(X)f(X)=0
\]
for some $C(X)\in K[X]$, where
\begin{equation}\label{eq:Psi-general}
\Psi
=
P_2\bigl(2c(1-m)\widetilde P_2+\Delta\widetilde P_1\bigr)
-H\widetilde P_2P_1.
\end{equation}

Moreover, $\deg\Psi\le4$ and
\[
\frac{f}{\gcd(f,H)}\mid\Psi.
\]

In particular, if
\[
\deg f-\deg\gcd(f,H)>4,
\]
then $\Psi=0$.

\end{proposition}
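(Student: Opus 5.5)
We differentiate the invariance identity \eqref{eq:general-mobius} twice and use the differential equation \eqref{eq:general-differential} at the point $\varphi(X)$ to eliminate $f''(\varphi)$. The target is an identity of the form $H\Psi f'+Cf=0$. We then use separability of $f$ to pass from this identity to a divisibility statement for $\Psi$.

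\textbf{Derivatives of $f\circ\varphi$.} Write $g(X)=f(\varphi(X))$. Since $\varphi'=\Delta/H^2$, we have
\[
g'=\Delta H^{-2}f'(\varphi),\qquad
g''=\Delta^2H^{-4}f''(\varphi)-2c\Delta H^{-3}f'(\varphi).
\]
Differentiating $H^m g=\lambda f$ once gives
\[
\lambda f' = mcH^{m-1}g + \Delta H^{m-2}f'(\varphi).
\]
This determines $f'(\varphi)$ in terms of $f$ and $f'$:
\[
f'(\varphi)=\Delta^{-1}H^{2-m}\bigl(\lambda f' - mc\lambda H^{-1}f\bigr).
\]
Differentiating a second time expresses $f''$ through $g,g',g''$, hence through $f(\varphi),f'(\varphi),f''(\varphi)$.

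\textbf{Eliminating $f''$.} The equation \eqref{eq:general-differential} evaluated at $\varphi$, multiplied by $H^2$, reads
\[
\widetilde P_2 f''(\varphi)+H\widetilde P_1 f'(\varphi)+\rho H^2 f(\varphi)=0.
\]
From it, $f''(\varphi)$ can be written in terms of $f'(\varphi)$ and $f(\varphi)$. Substituting this into the second-derivative identity, and then using \eqref{eq:general-differential} at $X$ to replace $P_2f''$ by $-P_1f'-\rho f$, we obtain a linear relation $\alpha(X)f'+\beta(X)f=0$ with rational coefficients. Since $f(\varphi)=\lambda H^{-m}f$ and $f'(\varphi)$ is linear in $f,f'$ by the display above, every term becomes a rational multiple of $f$ or of $f'$.

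Clearing powers of $H$ and the factor $\Delta$, we track the coefficient of $f'$. The terms contributing to it are:
\begin{itemize}
\item from $P_2\cdot$(the $g''$ part): $P_2\Delta\widetilde P_1$ and $P_2\cdot 2c(1-m)\widetilde P_2$, the latter combining $-2c\Delta H^{-3}$ with the $m$-terms;
\item from $P_1 f'$: the term $-H\widetilde P_2P_1$.
\end{itemize}
Together these give $H\Psi$, with $\Psi$ as in \eqref{eq:Psi-general}, and the remaining terms collect into $C(X)f$. This bookkeeping is the main obstacle. The risk is getting the factor $2c(1-m)$ right, since it merges contributions from $g''$ and from differentiating $H^{m-1}$. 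I would verify it on the case $c=0$, where $\Psi=P_2\Delta\widetilde P_1-d\widetilde P_2P_1$ should vanish for a pure scaling symmetry. I would also check that $C$ is indeed a polynomial, i.e.\ that all denominators clear after multiplying through by the appropriate power of $H$.

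\textbf{Degree bound and divisibility.} We have $\deg\widetilde P_2\le2$ and $\deg\widetilde P_1\le1$, so $\deg\Psi\le 2+2=4$ and $\deg(H\widetilde P_2P_1)\le 4$. For the divisibility, let $\alpha$ be a root of $f$ with $H(\alpha)\ne0$. Evaluating $H\Psi f'+Cf=0$ at $\alpha$ gives $H(\alpha)\Psi(\alpha)f'(\alpha)=0$. Separability gives $f'(\alpha)\ne0$, so $\Psi(\alpha)=0$. The roots of $f/\gcd(f,H)$ are exactly the roots of $f$ not killed by $H$, and they are simple because $f$ is separable. Hence $f/\gcd(f,H)$ divides $\Psi$. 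If $\deg f-\deg\gcd(f,H)>4\ge\deg\Psi$, this divisibility forces $\Psi=0$.
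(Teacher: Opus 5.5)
Your proposal is correct and is essentially the paper's argument: the paper also substitutes the derivatives of $g=f\circ\varphi=\lambda H^{-m}f$ into the equation at $\varphi(X)$, obtaining $H^2\widetilde P_2f''+H\bigl(2c(1-m)\widetilde P_2+\Delta\widetilde P_1\bigr)f'+C_0f=0$, then eliminates $f''$ with the equation at $X$ and concludes from separability as you do. Carrying out your bookkeeping gives $C_0=m(m-1)c^2\widetilde P_2-mc\Delta\widetilde P_1+\rho\Delta^2$, so both the factor $2c(1-m)$ and the polynomiality of $C=P_2C_0-\rho H^2\widetilde P_2$ check out; drop the proposed $c=0$ sanity check, though, since it cannot detect an error in $2c(1-m)$ (that term vanishes when $c=0$), and $\Psi$ need not vanish for a scaling symmetry when $\deg f\le 4$ (for $f=X$, $P_2=1$, $P_1=-\rho X$, $\varphi=aX$ one gets $\Psi=\rho(1-a^2)X$).
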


\begin{proof}

Set $g=f\circ\varphi$. Since $\varphi'=\Delta H^{-2}$ and
$\varphi''=-2c\Delta H^{-3}$, differentiating $g=f\circ\varphi$ gives
\[
f'(\varphi)=\frac{H^2}{\Delta}g',
\qquad
f''(\varphi)
=
\frac{H^4}{\Delta^2}g''
+\frac{2cH^3}{\Delta^2}g'.
\]

On the other hand, differentiating $g=\lambda H^{-m}f$ gives
\[
\begin{aligned}
g'&=\lambda H^{-m}
\left(f'-\frac{mc}{H}f\right),\\
g''&=\lambda H^{-m}
\left(
f''-\frac{2mc}{H}f'
+\frac{m(m+1)c^2}{H^2}f
\right).
\end{aligned}
\]

Substituting these expressions into
\eqref{eq:general-differential} evaluated at $\varphi(X)$ gives
\[
H^2\widetilde P_2f''
+
H\bigl(2c(1-m)\widetilde P_2+\Delta\widetilde P_1\bigr)f'
+
C_0f=0
\]
for some $C_0\in K[X]$.
Eliminating $f''$ by means of \eqref{eq:general-differential} yields
\[
H\Psi f'+Cf=0,
\]
where $\Psi$ is given by \eqref{eq:Psi-general}.
Since $\deg P_2,\deg\widetilde P_2\le2$ and
$\deg P_1,\deg\widetilde P_1\le1$, we have $\deg\Psi\le4$.
Since $f$ is separable, $\gcd(f,f')=1$, and hence $f\mid H\Psi$.
Therefore,
\[
\frac{f}{\gcd(f,H)}\mid\Psi.
\]
The last assertion follows from the degree bound.

\end{proof}

We establish the following additional property of $W_n$ for later use.

\begin{lemma}\label{lem:Bn-infinity}

Let $p>3$ and let $n>5$ be an integer not divisible by $p$.
If a M\"obius transformation permutes $W_n\cup\{\infty\}$, then it
fixes $\infty$.

\end{lemma}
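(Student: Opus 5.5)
Suppose a M\"obius transformation $\varphi$ permutes $W_n\cup\{\infty\}$ but does not fix $\infty$. The plan is to show that $\varphi$ then makes $B_n$ invariant in the sense of Proposition~\ref{prop:differential-reduction}, with the exponent $m=n+1$, and to obtain a contradiction by evaluating $\Psi$ at a single point.

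\textbf{Setup.} Write $\varphi(X)=(aX+b)/(cX+d)$. Since $\varphi(\infty)=a/c\ne\infty$, we have $c\neq0$ and $\beta:=\varphi(\infty)\in W_n$. Since $\varphi$ is a bijection of $W_n\cup\{\infty\}$, there is a unique $w_0\in W_n$ with $\varphi(w_0)=\infty$, that is, $H(w_0)=0$ for $H=cX+d$.

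The roots of $B_n$ are simple. Indeed, $B_n$ is monic of degree $n$ because $\binom{2n}{2n}=1$. By Lemma~\ref{lem:Bn-1}, $W_n=\tau(Z_n)$, and $\tau$ is injective. Moreover $-2\notin Z_n$, since $D_n(-2)=\pm2\neq0$, so $\infty\notin W_n$. Finally $Z_n$ has $n$ elements by Lemma~\ref{lem:dick-properties}. Hence $|W_n|=n$ and $B_n$ is separable.

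Now compare roots. The polynomial $H^{n+1}B_n(\varphi(X))$ has degree at most $n$. It vanishes on $\varphi^{-1}(W_n\setminus\{\beta\})=W_n\setminus\{w_0\}$, and it also vanishes at $w_0$ through the factor $H$. Therefore
\[
H^{n+1}B_n(\varphi(X))=\lambda B_n(X)
\]
for some $\lambda\in\overline{\F}_p^*$. This is exactly Eq.~\eqref{eq:general-mobius} with $f=B_n$ and $m=n+1$.

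\textbf{The differential equation for $B_n$.} Set $N=2n$. Each of $(1\pm t)^N$ satisfies $(1-t^2)u''+2(N-1)tu'-N(N-1)u=0$, so their sum $2B_n(t^2)$ does as well. Substituting $Y=t^2$, so that $d/dt=2t\,d/dY$ and $d^2/dt^2=4Y\,d^2/dY^2+2\,d/dY$, gives
\[
4X(1-X)B_n''+\bigl(2+(8n-6)X\bigr)B_n'-2n(2n-1)B_n=0 .
\]
Thus $P_2=4X(1-X)$ and $P_1=2+(8n-6)X$. Since $\deg B_n-\deg\gcd(B_n,H)=n-1>4$ (as $n>5$), Proposition~\ref{prop:differential-reduction} gives $\Psi=0$.

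\textbf{Evaluating $\Psi$ at $w_0$.} This is the step needing care; rather than expanding $\Psi$ fully, I evaluate it at $w_0$, where $H(w_0)=0$. At that point the term $H\widetilde P_2P_1$ vanishes. From $cw_0+d=0$ we get $aw_0+b=-\Delta/c$. Writing $\widetilde P_2=4(aX+b)\bigl(H-(aX+b)\bigr)$ and $\widetilde P_1=2H+(8n-6)(aX+b)$, we find
\[
\widetilde P_2(w_0)=-\frac{4\Delta^2}{c^2},
\qquad
\widetilde P_1(w_0)=-\frac{(8n-6)\Delta}{c}.
\]
Substituting into Eq.~\eqref{eq:Psi-general} with $m=n+1$,
\[
\Psi(w_0)=P_2(w_0)\Bigl(\frac{8n\Delta^2}{c}-\frac{(8n-6)\Delta^2}{c}\Bigr)=P_2(w_0)\,\frac{6\Delta^2}{c}.
\]

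The factor $6\Delta^2/c$ is nonzero because $p>3$. Also $P_2(w_0)\neq0$: we have $B_n(0)=1$ and $B_n(1)=2^{2n-1}$, both nonzero, so $w_0\notin\{0,1\}$. Hence $\Psi(w_0)\neq0$, contradicting $\Psi=0$. Therefore $\varphi$ fixes $\infty$.

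The main risk is bookkeeping: the sign conventions in $\Psi$ and the exact choice $m=n+1$, which is what produces the nonvanishing constant $6$. If the final constant came out as a multiple of $n$ instead, the hypothesis $p\nmid n$ would still close the argument.
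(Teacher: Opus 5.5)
Your proof is correct and follows essentially the same route as the paper: the same functional equation $H^{n+1}B_n(\varphi)=\lambda B_n$, the same differential equation (yours is twice the paper's), Proposition~\ref{prop:differential-reduction} forcing $\Psi=0$, and the fact that $0,1\notin W_n$. The only difference is how you extract the contradiction. You evaluate $\Psi$ at the root $w_0$ of $H$ and get $24\,w_0(1-w_0)\Delta^2/c\neq 0$. The paper instead reads off the $X^4$-coefficient $6v(1-v)$, where $v=\varphi(\infty)$.
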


\begin{proof}

Suppose that a M\"obius transformation $\varphi$ permutes
$W_n\cup\{\infty\}$ and does not fix $\infty$.
Then there exist $u,v\in W_n$ such that
$\varphi(u)=\infty$ and $\varphi(\infty)=v$.
Hence
\[
\varphi(X)=v+\frac{\kappa}{X-u}
\]
for some $\kappa\in\Fbar^*$.

Since $p\nmid n$, the polynomial $B_n$ is separable.
As $\varphi$ maps $W_n\setminus\{u\}$ onto $W_n\setminus\{v\}$ and
$v$ is a simple root of $B_n$, the polynomial
$(X-u)^nB_n(\varphi(X))$ has degree $n-1$, and its roots are precisely
the elements of $W_n\setminus\{u\}$.

Therefore, for some $\lambda\in\Fbar^*$,
\begin{equation}\label{eq:Bn-root-infinity}
(X-u)^{n+1}
B_n\left(v+\frac{\kappa}{X-u}\right)
=
\lambda B_n(X).
\end{equation}

Using Eq.~\eqref{eq:dif-dickson} and Eq.~\eqref{Dn-Bn}, one checks that
$f=B_n$ satisfies
\begin{equation}\label{eq:dif-Bn}
2X(1-X)f''
+
\bigl(1+(4n-3)X\bigr)f'
-n(2n-1)f
=0.
\end{equation}

We apply Proposition~\ref{prop:differential-reduction} with
\[
P_2=2X(1-X),
\qquad
P_1=1+(4n-3)X,
\qquad
\rho=-n(2n-1),
\]
and
\[
H=X-u,
\qquad
\Delta=-\kappa,
\qquad
m=n+1.
\]

Since $u$ is a simple root of $f$, we have $\gcd(f,H)=H$.
As $n>5$,
\[
\deg f-\deg H=n-1>4,
\]
and Proposition~\ref{prop:differential-reduction} gives $\Psi=0$.
For the above choices, a direct expansion of
Eq.~\eqref{eq:Psi-general} shows that the coefficient of $X^4$
in $\Psi$ is
$6v(1-v)$. Thus $6v(1-v)=0$.

Finally, $B_n(0)=1$ and $B_n(1)=2^{2n-1}\ne0$, so
$0,1\notin W_n$. Since $v\in W_n$, we have $v(1-v)\ne0$,
contradicting $p>3$.
Therefore, $\varphi$ fixes $\infty$.

\end{proof}

\section{A fundamental reduction}\label{sec:fundamental}
We observe from Eq.~\eqref{eq:dickson} that $D_n(X)$ is invariant by $\sigma:z\mapsto-z$ for every $n\ge1$. In the following lemma, we show that if $D_n$ is invariant by a M\"obius map distinct from $z\mapsto\pm z$, then it must be invariant by a special M\"obius map.

\begin{lemma}\label{lem:Mab}
Let $n\ge 2$ be an integer not divisible by $p$. If $D_n$ is invariant by some $A\not\sim\sigma,\mathrm{id}$, then there exist $a,b\in\overline{\F}_p$ such that $D_n$ is invariant by
\[
M_{a,b}:=\begin{pmatrix}a&b\\1&-a\end{pmatrix}.
\]
\end{lemma}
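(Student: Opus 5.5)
The plan is to exploit the fact that $\sigma$ already lies in the stabilizer, and to conjugate it by $A$. This produces an involution (a trace-zero matrix), and we will show it must have nonzero lower-left entry, so that after scaling it has the shape $M_{a,b}$.

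First, note that $D_n$ is separable since $p\nmid n$ (Lemma~\ref{lem:dick-properties}). By Lemma~\ref{lem:action}(a), if $D_n$ is invariant by $A$ then it is also invariant by $A^{-1}$, and by part (b) it is invariant by products of such matrices. Write $A=\begin{pmatrix}\alpha&\beta\\ \gamma&\delta\end{pmatrix}$.

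If $\gamma=0$, then after scaling by $\delta^{-1}$ (Lemma~\ref{lem:action}(c)) the matrix $A$ is upper triangular with lower-right entry $1$. Lemma~\ref{lem:affine} then forces $A\sim\sigma$ or $A\sim\mathrm{id}$, which is excluded. Hence $\gamma\neq 0$, i.e.\ the M\"obius map $A$ does not fix $\infty$.

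Now set $N=A\sigma A^{-1}$. Since $D_n$ is invariant by $A$, $\sigma$ and $A^{-1}$, it is invariant by $N$. Moreover $\operatorname{tr}N=\operatorname{tr}\sigma=0$, and $N\not\sim\mathrm{id}$ because $\sigma\not\sim\mathrm{id}$.

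We then distinguish two cases according to the lower-left entry of $N$.

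If that entry is nonzero, we divide $N$ by it. The resulting matrix still has trace zero and lower-left entry $1$, so it is exactly of the form $M_{a,b}=\begin{pmatrix}a&b\\1&-a\end{pmatrix}$. By Lemma~\ref{lem:action}(c), $D_n$ is invariant by it.

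If that entry is zero, then $N$ is upper triangular, so the same normalization and Lemma~\ref{lem:affine} give $N\sim\sigma$. This means $A$ commutes with $\sigma$ in $\mathrm{PGL}_2$, so $A$ preserves the fixed-point set $\{0,\infty\}$ of $z\mapsto -z$. Since $A(\infty)\neq\infty$, the map $A$ must swap $0$ and $\infty$. Hence $\alpha=\delta=0$, and $A\sim\begin{pmatrix}0&\beta/\gamma\\1&0\end{pmatrix}=M_{0,\beta/\gamma}$, which again has the required form.

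The only mildly delicate point is the last case, namely justifying that $N\sim\sigma$ forces $A$ to swap $0$ and $\infty$. One can argue via fixed points, or directly: $A\sigma\sim\sigma A$ gives $\begin{pmatrix}-\alpha&\beta\\-\gamma&\delta\end{pmatrix}=\mu\begin{pmatrix}-\alpha&-\beta\\ \gamma&\delta\end{pmatrix}$ for some scalar $\mu$. Since $\gamma\neq0$, comparing lower-left entries gives $\mu=-1$, and then the diagonal entries give $\alpha=\delta=0$. Everything else is bookkeeping with Lemma~\ref{lem:action}.
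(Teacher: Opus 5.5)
Your proof is correct and follows essentially the same route as the paper: rule out a zero lower-left entry of $A$ via Lemma~\ref{lem:affine}, conjugate $\sigma$ by $A$ to obtain a trace-zero matrix, and normalize that matrix when its lower-left entry is nonzero; otherwise you deduce $A\sim M_{0,b}$. The only cosmetic difference is in that last, degenerate case: the paper reads off $xy=0$ directly from the explicit entries of $A\sigma A^{-1}$, whereas you pass through $A\sigma\sim\sigma A$, which uses $p$ odd (assumed throughout the paper).
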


\begin{proof}
Write
\[
A=\begin{pmatrix}x&y\\z&w\end{pmatrix}.
\]
Since $A\not\sim\sigma,\mathrm{id}$, Lemma~\ref{lem:affine} implies $z\ne0$. In particular, $D_n$ is also invariant by
\[
A\sigma A^{-1}\sim
\begin{pmatrix}
wx+yz&-2xy\\
2wz&-(wx+yz)
\end{pmatrix}
\not\sim\mathrm{id}.
\]
If $w\ne0$, then $2wz\ne0$, and hence $A\sigma A^{-1}\sim M_{a,b}$ for some $a,b\in\overline{\F}_p$. If $w=0$, Lemma~\ref{lem:affine} implies that $-2xy=0$ or $A\sigma A^{-1}\sim\sigma$. In both cases, $-2xy=0$. Since $w=0$, we have $y\ne0$, and thus $x=0$. Therefore,  $A\sim M_{0,b}$ for some nonzero $b\in\overline{\F}_p$.
\end{proof}

Next we study the possible values of $n,a,b$ for which $D_n(X)$ is invariant by $M_{a,b}$.

\begin{proposition}\label{prop:main}

Fix $n>4$ not divisible by $p$ and assume that $D_n$ is invariant by
$M_{a,b}$. If $p\ge3$ and $a^2-4\ne0$, then
\[
2n\equiv1\pmod p
\quad\text{and}\quad
(a,b)\in\{(a,-4),(0,\pm4)\}.
\]
\end{proposition}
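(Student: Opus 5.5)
The plan is to apply Proposition~\ref{prop:differential-reduction} to $f=D_n$, which satisfies Eq.~\eqref{eq:dif-dickson} with
\[
P_2=X^2-4,\qquad P_1=X,\qquad \rho=-n^2 .
\]
For the Möbius map we take $\varphi(X)=\frac{aX+b}{X-a}$, so that
\[
H=X-a,\qquad c=1,\qquad d=-a,\qquad \Delta=-a^2-b,\qquad m=n .
\]
The invariance hypothesis gives Eq.~\eqref{eq:general-mobius} with $\lambda\ne0$. Since $D_n$ is separable, $\gcd(D_n,H)$ has degree at most $1$. Hence $\deg f-\deg\gcd(f,H)\ge n-1>4$ when $n\ge 6$. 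The case $n=5$ needs a separate remark: either $a\notin Z_n$, so the gcd is trivial, or we compare more carefully. In any event we aim to conclude $\Psi=0$.

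Next we compute the two transported coefficients:
\[
\widetilde P_2=(aX+b)^2-4(X-a)^2,\qquad \widetilde P_1=aX+b .
\]
Substituting into Eq.~\eqref{eq:Psi-general} gives
\[
\Psi=(X^2-4)\bigl(2(1-n)\widetilde P_2+\Delta(aX+b)\bigr)-(X-a)\,\widetilde P_2\,X .
\]
We then expand $\Psi$ and set each coefficient of $X^4,\dots,X^0$ to zero.
\begin{itemize}
\item The $X^4$ coefficient is $(a^2-4)(2(1-n)-1)=(a^2-4)(1-2n)$. Since $a^2\ne4$, this forces $2n\equiv1\pmod p$, which is the first claim.
\item With $2n-1=0$ in $K$ we have $2(1-n)=1$, so $\Psi$ simplifies to
\[
\Psi=(X^2-4)\bigl(\widetilde P_2+\Delta(aX+b)\bigr)-X(X-a)\widetilde P_2 .
\]
The $X^3$, $X^2$, $X^1$ and $X^0$ coefficients then give polynomial equations in $a,b$.
\end{itemize}

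The remaining step is to solve this system. We expect solutions of two kinds. One is $b=-4$ with $a$ arbitrary; there $\widetilde P_2=(a^2-4)(X^2-4)$ up to sign, so $\widetilde P_2$ is proportional to $P_2$ and the equations become consistent. The other is $a=0$, $b=\pm4$. We will also need to discard spurious solutions, for example ones with $\Delta=0$, which is excluded because $M_{a,b}$ is invertible, or ones contradicting $a^2\ne4$. Finally we use $p\ge3$ to divide by $2$, and $p\nmid n$ where needed.

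The main obstacle is this elimination: organizing the four remaining coefficient equations so that they visibly factor into the cases $b=-4$ and $(a,b)=(0,\pm4)$. We would first try to factor the constant coefficient, which should look like $-4b(\dots)+4a^2(\dots)$, and the $X^3$ coefficient, then substitute back. The borderline $n=5$, where $\gcd(f,H)=H$ could occur, would be handled by the same argument, since then $f/H$ still has degree $4$ and divides $\Psi$, and comparing leading terms again gives the needed identities.
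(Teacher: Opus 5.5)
Your route is exactly the paper's. You use the same data $P_2=X^2-4$, $P_1=X$, $H=X-a$, $\Delta=-a^2-b$, $m=n$ in Proposition~\ref{prop:differential-reduction}, the same $\Psi$, and the same $X^4$ coefficient $(1-2n)(a^2-4)$, which gives $2n\equiv1\pmod p$. As written, however, the proposal has two holes.

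The first is $n=5$. Your fallback (``$f/H$ has degree $4$ and divides $\Psi$, so compare leading terms'') does not work. It only gives $\Psi=c\,f/H$ for some constant $c$, possibly nonzero, so you cannot read off $(1-2n)(a^2-4)=0$. What you are missing is that this case never occurs: $X=a$ is the pole of $M_{a,b}$, and $M_{a,b}$ permutes $Z_n$, so $a\notin Z_n$. Concretely, evaluating $(X-a)^nD_n\bigl(\tfrac{aX+b}{X-a}\bigr)=tD_n(X)$ at $X=a$ gives $(a^2+b)^n=tD_n(a)$, and $a^2+b=-\Delta\neq0$. Hence $\gcd(D_n,H)=1$ for every $n$, and $\deg D_n=n>4$ already forces $\Psi=0$. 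This is why the hypothesis $n>4$ suffices.

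The second hole is that the elimination, which you call the main obstacle, is not carried out. Saying you expect solutions of two kinds does not show they are the only ones, and that is the whole content of the second conclusion. The computation is short. Once $2n=1$ in $\Fbar$, we have $2(1-n)=1$ and $4n-1=1$.
\begin{itemize}
\item The $X^3$ coefficient of $\Psi$, which in general is $a(4n-1)(a^2+\Delta-4)$, becomes $-a(b+4)$.
\item The constant coefficient becomes $-4\bigl(\widetilde P_2(0)+\Delta b\bigr)=4a^2(b+4)$, as you suspected.
\end{itemize}
So $b=-4$ or $a=0$. If $a=0$, then $\widetilde P_2=b^2-4X^2$ and $\Delta=-b$, so
$\Psi=(X^2-4)(-4X^2)-X^2(b^2-4X^2)=(16-b^2)X^2$, which forces $b=\pm4$. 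The paper gets the same from the $X^2$ coefficient $-2n(a^2+\Delta)^2-a^2(a^2+\Delta)+4(4n-1)a^2-32(n-1)$ at $a=0$.

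There are also no spurious solutions to discard. The proposition states only necessary conditions, so you never have to check that the pairs $(a,-4)$ and $(0,\pm4)$ actually occur.
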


\begin{proof}
Set
\[
s:=\det M_{a,b}=-a^2-b,
\qquad
u=aX+b,
\qquad
v=X-a,
\qquad
R=u^2-4v^2.
\]
Since $D_n$ is invariant by $M_{a,b}$, there exists $t\in\Fbar^*$ such
that
\[
v^{n}D_n\left(\frac{u}{v}\right)=tD_n(X).
\]
We apply  Proposition~\ref{prop:differential-reduction} with
\[
f=D_n,
\qquad
P_2=X^2-4,
\qquad
P_1=X,
\qquad
\rho=-n^2,
\]
and
\[
H=v,
\qquad
\Delta=s,
\qquad
m=n.
\]
In this case, $\widetilde P_2=R$ and $\widetilde P_1=u,$ and hence
\begin{equation}\label{eq:Psi-Dn}
\Psi(X)
=
(X^2-4)\bigl(2(1-n)R+su\bigr)-XvR.
\end{equation}

Since $p\nmid n$, the polynomial $D_n$ is separable.
Moreover, as $M_{a,b}$ permutes its roots, no root of $D_n$ is a zero of
$v$. Thus $\gcd(D_n,v)=1$.

Since $n>4$, Proposition~\ref{prop:differential-reduction} gives
$\Psi=0$.
Expanding \eqref{eq:Psi-Dn}, the coefficients of $X^4,  X^3$ and $X^2$ give
$$
\left\{
\begin{array}{l}
(1-2n)(a^2-4)=0,\\
a(4n-1)(a^2+s-4)=0,\\
-2n(a^2+s)^2-a^2(a^2+s)+4(4n-1)a^2-32(n-1)=0.
\end{array}
\right.
$$
Since $a^2-4\ne0$, the first equation gives $2n-1\equiv0\pmod p$. In particular, $4n-1\equiv1\pmod p$, and the second equation implies that either $a=0$ or $a^2+s-4=0$.
If $a\ne0$, then $s=-a^2-b$ gives $b=-4$. If $a=0$, the third equation gives
$$
-2ns^2-32(n-1)=0.
$$
Using $2n-1\equiv0\pmod p$, we obtain $b^2=s^2=16$, and hence $b=\pm4$.

\end{proof}

\section{Proof of Theorem~\ref{thm:stabilizer-Dn}}\label{sec:proof-stabilizer}

\begin{proof}[Proof of Theorem~\ref{thm:stabilizer-Dn}]
Let $n>4$ be not divisible by $p$, and suppose that $D_n$ is invariant
under some M\"obius map distinct from $\sigma$ and $\mathrm{id}$.
By Lemma~\ref{lem:Mab}, $D_n$ is invariant under $M_{a,b}$ for some
$a,b$. Proposition~\ref{prop:main} implies that either $a^2=4$, or
$2n\equiv1\pmod p$ and
$(a,b)\in\{(a,-4),(0,\pm4)\}$.
Since $D_n$ is invariant under $\sigma$ and
$\sigma M_{0,b}\sim M_{0,-b}$, we may assume $b=-4$ whenever
$a^2\ne4$. Thus there are two cases.

\begin{enumerate}[(i)]
\item Suppose $b=-4$. By Lemma~\ref{lem:Bn-1}, $B_n(X)$ is invariant
under
\[
\begin{pmatrix}1&-2\\1&2\end{pmatrix}
\begin{pmatrix}a&-4\\1&-a\end{pmatrix}
\begin{pmatrix}1&-2\\1&2\end{pmatrix}^{-1}
\sim
\begin{pmatrix}0&a-2\\a+2&0\end{pmatrix}.
\]
By Remark~\ref{rem:Bn}, it is also invariant under the inversion
$X\mapsto X^{-1}$, and hence under the scaling
$X\mapsto\lambda X$, where
$\lambda=(a-2)/(a+2)$. Since $a^2\ne4$, we have
$\lambda\ne0,1$. As $2n\equiv1\pmod p$, Lemma~\ref{lem:Bn-2}
yields $2n=p^r+1$ for some $r\ge1$.

\item Suppose $a^2=4$. Since
$\sigma M_{a,b}\sigma^{-1}\sim M_{-a,b}$, we may assume $a=-2$.
Set $\Delta=a^2+b=b+4$. Then
\[
(X+2)^nD_n\left(\frac{\Delta}{X+2}-2\right)=tD_n(X).
\]
Replacing $X$ by $X^2-2$, choose
$\delta\in\overline{\F}_p$ with $\delta^2=\Delta$. Since
$D_2(X)=X^2-2$ and
$D_n(D_2(X))=D_{2n}(X)$ by Lemma~\ref{lem:dick-properties}, we obtain
\[
X^{2n}D_{2n}\left(\frac{\delta}{X}\right)=tD_{2n}(X).
\]
Thus $D_{2n}$ is invariant under $M_{0,\delta}$.
Proposition~\ref{prop:main}, applied to $D_{2n}$, gives
$4n\equiv1\pmod p$ and $\delta=\pm4$. As before, we may assume
$\delta=-4$. Applying part~(i) to $D_{2n}$ and using
Lemma~\ref{lem:Bn-2}, we obtain $4n=p^r+1$ for some $r\ge1$.
The result follows.
\end{enumerate}
\end{proof}

For later use, we record the following consequence of
Theorem~\ref{thm:stabilizer-Dn}, concerning the stabilizer of the
enlarged set $Z_n\cup\{-2\}$.

\begin{corollary}\label{cor:stabilizer-extended}
Let $p>3$, and let $n>5$ be such that $p\nmid n$. Set
\[
Z_n=\{\alpha\in\overline{\mathbb F}_p:D_n(\alpha)=0\},
\qquad
\widetilde Z_n=Z_n\cup\{-2\}.
\]
If neither $2n-1$ nor $4n-1$ is a power of $p$, then the identity is
the only M\"obius transformation that permutes $\widetilde Z_n$.
\end{corollary}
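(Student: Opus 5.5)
The proof has two steps. First, show that any Möbius map permuting $\widetilde Z_n$ must fix $-2$. Once it does, it permutes $Z_n$, so Theorem~\ref{thm:stabilizer-Dn} leaves only the identity or $z\mapsto -z$. The latter sends $-2$ to $2$, and $2\notin\widetilde Z_n$, since $D_n(2)=2\neq 0$ by \eqref{eq:x+1/x} with $X=1$, and $2\ne -2$ because $p$ is odd. Hence only the identity survives.

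To show that $\varphi$ fixes $-2$, transport the problem by $\tau$. Since $\tau\circ(-2)=\infty$ and $\tau(Z_n)=W_n$ by Lemma~\ref{lem:Bn-1}, the map $\tau\varphi\tau^{-1}$ permutes $W_n\cup\{\infty\}$. By Lemma~\ref{lem:Bn-infinity}, which applies because $p>3$, $n>5$ and $p\nmid n$, this conjugate fixes $\infty$. Therefore $\varphi$ fixes $\tau^{-1}(\infty)=-2$, and so $\varphi$ permutes $Z_n=\widetilde Z_n\setminus\{-2\}$.

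Two routine checks remain. The point $-2$ does not lie in $Z_n$, because $D_n(-2)=(-1)^n\cdot 2\neq0$ (take $X=-1$ in \eqref{eq:x+1/x}); so $\widetilde Z_n$ really has $n+1$ elements, $\infty$ is genuinely adjoined, and the bijection $\tau(\widetilde Z_n)=W_n\cup\{\infty\}$ is valid. Also, the hypotheses of Theorem~\ref{thm:stabilizer-Dn} hold, since $n>5>4$, $p>3>2$, and neither $2n-1$ nor $4n-1$ is a power of $p$. I expect no real obstacle, since all the heavy lifting is in Lemma~\ref{lem:Bn-infinity} and Theorem~\ref{thm:stabilizer-Dn}; the only point needing care is this bookkeeping that $-2\mapsto\infty$ under $\tau$ and that $\pm2$ are not roots of $D_n$.
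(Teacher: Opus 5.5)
Your proof is correct and follows essentially the same argument as the paper. Conjugating by $\tau$, which sends $-2$ to $\infty$ and $Z_n$ onto $W_n$, and applying Lemma~\ref{lem:Bn-infinity} rules out any map that moves $-2$; Theorem~\ref{thm:stabilizer-Dn} then rules out every map fixing $-2$ except the identity. The only difference is ordering: you first prove that $-2$ is fixed and then apply the theorem, while the paper splits into the cases $\theta(-2)=-2$ and $\theta(-2)\in Z_n$.
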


\begin{proof}
Let $\theta$ be a nontrivial M\"obius transformation that permutes
$\widetilde Z_n$.
Suppose  that $\theta(-2)=-2$. Then $\theta$ permutes $Z_n$, and
Theorem~\ref{thm:stabilizer-Dn} gives
$\theta(X)=-X,$
contradicting $\theta(-2)=-2$.

Suppose now that
$\theta(-2)=\beta\in Z_n.$
Since $\theta$ permutes $\widetilde Z_n$, there exists $\alpha\in Z_n$
such that $\theta(\alpha)=-2.$
Set
\[
\gamma=\tau\theta\tau^{-1},
\qquad
u=\tau(\alpha),
\qquad
v=\tau(\beta),
\]
 where $\tau$ is as in \eqref{tau-def}.
By Lemma~\ref{lem:Bn-1}, $\tau$ maps $Z_n$ bijectively onto $W_n$, while
$\tau(-2)=\infty.$
Hence $\gamma$ permutes
$W_n\cup\{\infty\}.
$
Moreover,
\[
\gamma(u)=\infty,
\qquad
\gamma(\infty)=v\in W_n.
\]
Thus $\gamma$ does not fix $\infty$, contradicting
Lemma~\ref{lem:Bn-infinity}.
Therefore, no nontrivial M\"obius transformation permutes
$\widetilde Z_n$.
\end{proof}

\section{Special curves and their automorphisms}
\label{sec:aut-chebyshev-curves}

\subsection{The Chebyshev curves}
Let $p>2$, and let $n>4$ be such that
$p\nmid n$. Consider the smooth plane curve
\[
\mathcal C_n:\qquad y^n=D_n(x).
\]
Theorem~\ref{thm:stabilizer-Dn} completes the general case left open in
\cite[Expectation~3.19]{TafazolianTopAut} and, together with the
exceptional cases determined therein, yields the following classification\protect\footnotemark[1].
\footnotetext[1]{\textbf{Historical Note}:
During the \emph{Workshop on Algebraic Curves and Function Fields over a
Finite Field}, held in Perugia in February 2015, a question concerning the
existence of nonsingular plane maximal curves beyond the Fermat and Hurwitz
families was posed, with a symbolic prize announced for a solution. A
positive answer was presented during the same meeting through the family
$\mathcal{C}_n:\,y^n=D_n(x)$, here called Chebyshev curves. These curves
provided a new family beyond the two known families. This result was later
formalized and published in \cite{ArakelianBorgesSpeziali}, although the full
automorphism groups were not determined there. The two exceptional cases in
which either $2n-1$ or $4n-1$ is a power of $p$ were subsequently settled in
\cite{TafazolianTopAut}. The present work completes the picture by treating
the general case.}

\begin{theorem}\label{thm:aut-Cn}
Let $p>2$, and let $n>4$, with
$p\nmid n$. Then
\[
\operatorname{Aut}_{\overline{\mathbb F}_p}(\mathcal C_n)\simeq
\begin{cases}
(C_n\times C_n)\rtimes S_3,
    & 2n-1=p^a,\\[1mm]
C_n\times S_3,
    & 4n-1=p^e,\\[1mm]
C_n\times C_2,
    & \text{otherwise},
\end{cases}
\]
where $a,e$ are positive integers.
\end{theorem}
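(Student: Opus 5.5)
The plan is to take the two exceptional cases ($2n-1=p^a$ and $4n-1=p^e$) directly from \cite{TafazolianTopAut}, so only the generic case remains: neither $2n-1$ nor $4n-1$ is a power of $p$. There we show $\operatorname{Aut}(\mathcal C_n)\simeq C_n\times C_2$.

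\emph{Step 1: automorphisms are linear.} Since $D_n$ is separable ($p\nmid n$), the curve $\mathcal C_n$ is a nonsingular plane curve of degree $n\ge5$. We use the classical fact that every automorphism of a nonsingular plane curve of degree at least $4$ is induced by a projective linear map of $\mathbb P^2$; this holds in positive characteristic as well, because the canonical series is cut out by curves of degree $n-3$ and is therefore unique. This is the fact already used in \cite{TafazolianTopAut}, so $\operatorname{Aut}(\mathcal C_n)\subseteq\operatorname{PGL}_3(\Fbar)$.

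\emph{Step 2: linear automorphisms preserve the cyclic cover.} The group $C_n=\langle (x,y)\mapsto(x,\zeta y)\rangle$ acts with quotient map $\pi:(x,y)\mapsto x$ onto $\mathbb P^1$. We want to show that $C_n$ is normal in $\operatorname{Aut}(\mathcal C_n)$. In projective coordinates, $\zeta$ acts as a homology whose center is $P=(0:1:0)$ and whose axis is $\{Y=0\}$. The point $P$ is not on $\mathcal C_n$, and every line through $P$ meets $\mathcal C_n$ in $n$ points forming a single $C_n$-orbit. Such "Galois points" of order $n$ are rare. The argument of \cite{TafazolianTopAut}, or a count of Galois points on smooth plane curves (Yoshihara), shows that outside the Fermat-type situation, $P$ is the unique outer Galois point. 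That Fermat-type situation is exactly the case $2n-1=p^a$, which is excluded here. Hence every automorphism fixes $P$, normalizes $C_n$, and descends to a Möbius map $\bar\theta$ of $\mathbb P^1$ permuting the branch locus of $\pi$. The branch locus is exactly $Z_n$, since $\infty$ is unramified when $n$ is the degree. This step is the main obstacle. If the Galois-point uniqueness is not readily citable, the fallback is to compute directly that any linear map not fixing $P$ would force extra Galois points, and therefore additional symmetry; the existing literature handles this in the same way.

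\emph{Step 3: apply Theorem~\ref{thm:stabilizer-Dn} and assemble the group.} In the generic case, Theorem~\ref{thm:stabilizer-Dn} gives $\bar\theta\in\{z\mapsto z,\ z\mapsto -z\}$.
\begin{itemize}
\item The kernel of the descent map $\theta\mapsto\bar\theta$ is $C_n$, because the cover $\pi$ is Galois.
\item The map $z\mapsto -z$ lifts. If $n$ is odd, $D_n(-x)=-D_n(x)$, so $(x,y)\mapsto(-x,-y)$ is an automorphism. If $n$ is even, $D_n(-x)=D_n(x)$, so $(x,y)\mapsto(-x,y)$ is an automorphism.
\end{itemize}
Each of these lifts is an involution commuting with $y\mapsto\zeta y$. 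Hence the extension splits and is direct, giving $C_n\times C_2$.
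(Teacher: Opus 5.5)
Your proposal is correct and follows essentially the same route as the paper. The paper cites \cite{TafazolianTopAut} for the two exceptional cases, and in the generic case it uses Proposition~2.1 and Lemma~3.4 of \cite{TafazolianTopAut} (for $2n-1$ not a power of $p$) to get $\operatorname{Aut}(\mathcal C_n)/N\simeq\operatorname{Stab}_{\operatorname{PGL}_2(\overline{\mathbb F}_p)}(Z_n)$, which is what your Steps~1--2 establish; it then applies Theorem~\ref{thm:stabilizer-Dn} and lifts $z\mapsto -z$ to the involution $(x,y)\mapsto(-x,-y)$, which works for both parities of $n$. For Step~2, rely on that citation rather than Yoshihara's Galois-point count, since Yoshihara's result is in characteristic zero.
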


\begin{proof}
If $2n-1=p^a$, the result follows from
\cite[Propositions~3.1 and~3.3]{TafazolianTopAut}; if
$4n-1=p^e$,  from Theorem~3.17 together with
Proposition~3.15 of \cite{TafazolianTopAut}. Thus it remains to consider the
case in which neither $2n-1$ nor $4n-1$ is a power of $p$.

Let $Z_n=\{\alpha\in \overline{\mathbb F}_p:D_n(\alpha)=0\}$ and
\[
N=\{(x,y)\mapsto(x,\zeta y):\zeta^n=1\}\simeq C_n.
\]
Since $2n-1$ is not a power of $p$, Proposition~2.1 and Lemma~3.4 of
\cite{TafazolianTopAut} give
\[
\operatorname{Aut}_{\overline{\mathbb F}_p}(\mathcal C_n)/N
\simeq
\operatorname{Stab}_{\operatorname{PGL}_2(\overline{\mathbb F}_p)}(Z_n).
\]
By Theorem~\ref{thm:stabilizer-Dn}, the assumption that neither
$2n-1$ nor $4n-1$ is a power of $p$ yields
\[
\operatorname{Stab}_{\operatorname{PGL}_2(\overline{\mathbb F}_p)}(Z_n)
=
\langle x\mapsto-x\rangle\simeq C_2.
\]
Since the involution $\rho:(x,y)\mapsto(-x,-y)$ is an automorphism of
$\mathcal C_n$, with $\rho\notin N$, and commutes with $N$, we have
$\langle N,\rho\rangle\simeq C_n\times C_2$. Since
$\operatorname{Aut}_{\overline{\mathbb F}_p}(\mathcal C_n)/N\simeq C_2$,
it follows that
\[
\operatorname{Aut}_{\overline{\mathbb F}_p}(\mathcal C_n)
=
\langle N,\rho\rangle
\simeq C_n\times C_2.
\]
\end{proof}

\subsection{Maximal hyperelliptic curves with few automorphisms}
\label{sec:maximal-hyperelliptic}

The following family is not our main target, but it provides another natural class of maximal curves with few automorphisms. Since, for even \(n\), it arises naturally as a subcover of the Chebyshev curve considered above, we record this additional case here.

\begin{theorem}\label{thm:maximal-hyperelliptic-Dn}
Let $q=p^s$, with $p>2$, and let $n>4$ be an even divisor of
$(q+1)/2$. If neither $2n-1$ nor $4n-1$ is a power of $p$, then
the curve
\[
\mathcal X_n:\qquad y^2=D_n(x)
\]
is $\mathbb F_{q^2}$-maximal of genus $g=(n-2)/2$, and
\[
\operatorname{Aut}_{\overline{\mathbb F}_p}(\mathcal X_n)
=
\{(x,y)\mapsto(\pm x,\pm y)\}
\simeq C_2\times C_2.
\]
\end{theorem}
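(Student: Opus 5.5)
The plan has three parts: maximality via a covering argument, the genus from the number of branch points, and the automorphism group via the standard hyperelliptic reduction combined with Theorem~\ref{thm:stabilizer-Dn}.

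\emph{Maximality.} Since $n$ is even and $n\mid (q+1)/2$, the Chebyshev curve $\mathcal C_n: w^n=D_n(x)$ covers $\mathcal X_n$ via $(x,w)\mapsto (x,w^{n/2})$, with $y=w^{n/2}$. By the known maximality criterion of \cite{ArakelianBorgesSpeziali}, $\mathcal C_n$ is $\F_{q^2}$-maximal whenever $n\mid q+1$, which holds here. By the Kleiman--Serre theorem, any curve that is $\F_{q^2}$-covered by a maximal curve is itself maximal, so $\mathcal X_n$ is maximal. We need the cover to be defined over $\F_{q^2}$; this is fine because $D_n$ has coefficients in $\F_p$. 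If the criterion is instead available directly for hyperelliptic curves $y^2=D_n(x)$ with $n\mid (q+1)/2$, we would invoke that version and skip the cover.

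\emph{Genus.} For $p\nmid n$, Lemma~\ref{lem:dick-properties} shows that $D_n$ is separable of even degree $n$. So $\mathcal X_n$ is hyperelliptic with $n$ branch points, none of them at infinity, and hence $g=(n-2)/2$. Since $n>4$, we have $g\ge 2$.

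\emph{Automorphisms.} Because $g\ge 2$, the hyperelliptic involution $\iota$ is central in $\operatorname{Aut}(\mathcal X_n)$. Therefore $\operatorname{Aut}(\mathcal X_n)/\langle\iota\rangle$ embeds in $\operatorname{PGL}_2(\Fbar)$, namely as the stabilizer of the branch locus $Z_n$. Here we use that $\infty$ is not a branch point, since $\deg D_n$ is even. Theorem~\ref{thm:stabilizer-Dn}, with $p>2$, $n>4$, $p\nmid n$ and neither $2n-1$ nor $4n-1$ a power of $p$, shows that this stabilizer is $\{z\mapsto z,\ z\mapsto -z\}$. The map $z\mapsto -z$ does lift: since $D_n(-x)=(-1)^nD_n(x)=D_n(x)$ for even $n$, the map $(x,y)\mapsto(-x,y)$ is an automorphism. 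Hence $|\operatorname{Aut}(\mathcal X_n)|=4$, and the group is $\{(x,y)\mapsto(\pm x,\pm y)\}\simeq C_2\times C_2$, because all its elements are commuting involutions.

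We still need to check that $p\nmid n$, which the stated hypotheses do not list. This follows from $n\mid q+1$: if $p\mid n$ then $p\mid q+1$, which is impossible since $p\mid q$. The main obstacle I expect is pinning down the exact maximality criterion to cite; if it is not directly available, the covering argument through $\mathcal C_n$ is the fallback.
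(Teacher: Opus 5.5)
Your proposal is correct and follows the paper's proof essentially step for step. Maximality of $\mathcal X_n$ comes from that of the Chebyshev curve $\mathcal C_n$ via the cover $(u,v)\mapsto(u,v^{n/2})$ and Kleiman--Serre, the genus comes from $\deg D_n=n$ being even, and the automorphism group comes from $\operatorname{Aut}(\mathcal X_n)/\langle\iota\rangle\simeq\operatorname{Stab}_{\operatorname{PGL}_2(\overline{\mathbb F}_p)}(Z_n)$ together with Theorem~\ref{thm:stabilizer-Dn} and the lift $(x,y)\mapsto(-x,y)$.

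One correction concerns how you quote the maximality of $\mathcal C_n$. It holds under $n\mid(q+1)/2$ (i.e.\ $2n\mid q+1$), which is exactly the hypothesis here, so your argument goes through. It does not hold under the weaker condition $n\mid q+1$ alone: for $q=3$, $n=4$, the curve $y^4=D_4(x)$ has $24$ rather than $28$ points over $\mathbb F_9$.
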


\begin{proof}
Since $n\mid(q+1)/2$, we have $p\nmid n$, and hence $D_n$ is separable.
Since $n$ is even, $\deg D_n=n=2g+2$, where $g=(n-2)/2$.

Consider the curve
\[
\mathcal C_n:\qquad v^n=D_n(u).
\]
Under the assumption $n\mid(q+1)/2$, the curve $\mathcal C_n$ is
$\mathbb F_{q^2}$-maximal; see
\cite[Theorem~6.3]{ArakelianBorgesSpeziali} and
\cite{KazemifardTafazolianTorres}. Since $n$ is even,
$(u,v)\mapsto(u,v^{n/2})$ defines an $\mathbb F_q$-covering
$\mathcal C_n\to\mathcal X_n$. Hence $\mathcal X_n$ is
$\mathbb F_{q^2}$-maximal by the Kleiman--Serre covering result
\cite[Proposition~6]{Lachaud}.

Let $\iota:(x,y)\mapsto(x,-y)$ be the hyperelliptic involution.
The branch locus of the hyperelliptic covering is $Z_n$, and therefore,
\[
\operatorname{Aut}_{\overline{\mathbb F}_p}(\mathcal X_n)/
\langle\iota\rangle
\simeq
\operatorname{Stab}_{\operatorname{PGL}_2(\overline{\mathbb F}_p)}(Z_n).
\]
By Theorem~\ref{thm:stabilizer-Dn},
\[
\operatorname{Stab}_{\operatorname{PGL}_2(\overline{\mathbb F}_p)}(Z_n)
=
\langle x\mapsto-x\rangle
\simeq C_2.
\]
It is clear that $ x\mapsto-x$ lifts to the involution
$\theta:(x,y)\mapsto(-x,y)$, 
with $\theta \notin \langle\iota \rangle$. Thus
\[
\operatorname{Aut}_{\overline{\mathbb F}_p}(\mathcal X_n)
=
\langle\iota,\theta\rangle
=
\{(x,y)\mapsto(\pm x,\pm y)\}
\simeq C_2\times C_2.
\]
\end{proof}

We now turn to our main target, constructed from Chebyshev polynomials of odd degree. In this case, the branch locus is obtained by adjoining \(-2\) to the roots of \(D_n\), and Corollary~\ref{cor:stabilizer-extended} applies.

\begin{proof}[Proof of Theorem~\ref{thm:main-hyperelliptic}]
Since $n\mid(q+1)/2$, we have $q\equiv-1\pmod{2n}$.
In the notation of \cite{TafazolianTopMaximal},
$\varphi_n(X)=D_n(X)$. Since $n$ is odd,
\cite[Theorem~1.4]{TafazolianTopMaximal} implies that
\[
\mathcal X_n:\qquad y^2=(x+2)D_n(x)
\]
is an $\F_{q^2}$-maximal curve of genus $g=(n-1)/2$.

Since $p\nmid n$, the polynomial $D_n$ is separable, and
$D_n(-2)=2(-1)^n\ne0$. Hence the branch locus is
$\widetilde Z_n=Z_n\cup\{-2\}.
$
Let $\iota:(x,y)\mapsto(x,-y)$ be the hyperelliptic involution. Then
\[
\operatorname{Aut}_{\overline{\mathbb F}_p}(\mathcal X_n)/
\langle\iota\rangle
\simeq
\operatorname{Stab}_{\operatorname{PGL}_2(\overline{\mathbb F}_p)}
(\widetilde Z_n).
\]
By Corollary~\ref{cor:stabilizer-extended}, this stabilizer  is
trivial. Hence
$\operatorname{Aut}_{\overline{\mathbb F}_p}(\mathcal X_n)
=
\langle\iota\rangle
\simeq C_2.$
\end{proof}

\section{Explicit Families and an Extremal Question}\label{sec:explicit-families}

\subsection*{Explicit Families}

To illustrate the scope of Theorems~\ref{thm:main-hyperelliptic} and \ref{thm:maximal-hyperelliptic-Dn}, we now single out explicit infinite families satisfying their hypotheses.

\subsubsection*{Case \(n\) odd}

Fix \(p>3\) and \(d \geq 1\), and for each \(r \geq 1\) set
\[
q_{r,d}=p^{d(2r+1)}, \qquad
n_{r,d}=\frac{p^{d(2r+1)}+1}{p^d+1}.
\]
Since
\[
n_{r,d}
=
1+\left(p^d-1\right)
\left(p^d+p^{3d}+\cdots+p^{(2r-1)d}\right),
\]
\(n_{r,d}\) is odd. Moreover, \(q_{r,d}+1=\left(p^d+1\right)n_{r,d}\), so \(n_{r,d}\mid \left(q_{r,d}+1\right)/2\). Finally, \(n_{r,d}\equiv 1 \pmod p\), and hence
\[
2n_{r,d}-1\equiv 1 \pmod p,
\qquad
4n_{r,d}-1\equiv 3 \pmod p.
\]
Thus neither \(2n_{r,d}-1\) nor \(4n_{r,d}-1\) is a power of \(p\), and Theorem~\ref{thm:main-hyperelliptic} applies to
\[
\mathcal{X}_{r,d}:\qquad
y^2=(x+2)D_{n_{r,d}}(x).
\]
Therefore, \(\mathcal{X}_{r,d}\) is \(\mathbb{F}_{q_{r,d}^2}\)-maximal of genus
\begin{equation}
g\left(\mathcal{X}_{r,d}\right)
=
\frac{n_{r,d}-1}{2}
=
\frac{p^d\left(p^{2rd}-1\right)}
{2\left(p^d+1\right)},
\label{eq:genus-odd-family}
\end{equation}
and, for fixed \(p\) and \(d\), we have \(g\left(\mathcal{X}_{r,d}\right)\to\infty\) while \(\operatorname{Aut}_{\overline{\mathbb F}_p}\left(\mathcal{X}_{r,d}\right)\simeq C_2\).

\subsubsection*{Case \(n\) even}

Fix \(p\equiv3\pmod4\) and an odd integer \(d\geq1\), with \((p,d)\neq(3,1),(7,1)\), and for each \(r\geq1\) set
\[
q_{r,d}=p^{d(2r+1)},\qquad
n_{r,d}=2\frac{p^{d(2r+1)}+1}{p^d+1}.
\]
Then \(n_{r,d}\) is even, and since \(p^d\equiv3\pmod4\),
$
\frac{q_{r,d}+1}{2}
=
\frac{p^d+1}{4}\,n_{r,d},
$
so \(n_{r,d}\mid(q_{r,d}+1)/2\). One checks that, since \((p,d)\neq(3,1),(7,1)\), neither \(2n_{r,d}-1\) nor \(4n_{r,d}-1\) is a power of \(p\). Therefore Theorem~\ref{thm:maximal-hyperelliptic-Dn} applies to
\[
\mathcal Y_{r,d}:\qquad y^2=D_{n_{r,d}}(x).
\]
Hence \(\mathcal Y_{r,d}\) is \(\mathbb F_{q_{r,d}^2}\)-maximal of genus
\[
g(\mathcal Y_{r,d})
=
\frac{n_{r,d}-2}{2}
=
\frac{p^d\left(p^{2rd}-1\right)}{p^d+1},
\]
and, for fixed \(p\) and \(d\), we have \(g(\mathcal Y_{r,d})\to\infty\) while \(\operatorname{Aut}_{\overline{\mathbb F}_p}\left(\mathcal Y_{r,d}\right)\simeq C_2\times C_2\).

\subsection*{An extremal question}

The preceding results suggest a natural extremal question. 
Let \(\mathcal X/\mathbb F_{q^2}\) be a maximal hyperelliptic curve of genus \(g\), with \(q\) odd. It follows from \cite[Corollary~3.6 and Theorem~6.1]{GarciaTafazolian} that
\[
g\leq \frac{q-1}{2},
\]
with equality if and only if \(\mathcal X\) is isomorphic to the Roquette curve
\[
y^2=x^q+x.
\]
The automorphism group of this curve is exceptionally large; indeed, it is one of the four types of curves occurring in Henn's classification of curves with
\(\lvert\operatorname{Aut}(\mathcal X)\rvert\geq 8g^3\)
\cite{Henn}. Since every hyperelliptic curve has at least the hyperelliptic involution, it is natural to define
\[
\mu(q)=\max\left\{g(\mathcal X):
\mathcal X/\mathbb F_{q^2}\text{ is maximal hyperelliptic and }
\operatorname{Aut}_{\overline{\mathbb F}_p}(\mathcal X)\simeq C_2
\right\},
\]
whenever such curves exist; otherwise, we set \(\mu(q)=-1\). For $q$ odd, the  discussion yields
\[
\mu(q)\leq \frac{q-3}{2}.
\]
Note that our construction gives lower bounds for certain values of $q$. Indeed, considering the  curves $\mathcal{X}_{r,d}$  above, we have
\[
\mu(q_{r,d})\geq \frac{q_{r,d}-p^d}{2(p^d+1)}.
\]

For some small primes \(p\), the value of \(\mu(p)\) can be determined by combining explicit results on superspecial hyperelliptic curves in small genus; see Ibukiyama--Katsura--Oort \cite{IbukiyamaKatsuraOort}, Ohashi--Onuki--Kudo--Yoshizumi--Nuida \cite{OhashiEtAl}, and Kudo--Harashita \cite{KudoHarashita} for genus \(2\), \(3\), and \(4\), respectively.

\medskip
\noindent\textbf{A General Question.} How large can \(\mu(q)\) be?

\section*{Declaration of competing interest}

The authors declare that they have no known competing financial interests or personal relationships that could have appeared to influence the work reported in this paper.

\section*{Data availability statement}No data are associated with this paper.

\end{document}